\title{Large Rank Simple Bundles of All Homological Dimensions}
\author{Kaiying Hou \\ {khou@college.harvard.com}}
\date{\today}
\newcommand{\Pp}{\mathbb{P}}
\newcommand{\Oo}{\mathcal{O}}
\newcommand{\End}{\text{End}}
\newcommand{\coker}{\operatorname{coker}}
\newcommand{\Hom}{\operatorname{Hom}}
\DeclareMathOperator{\im}{im}
\DeclareMathOperator{\Gr}{Gr}
\newtheorem{theorem}{Theorem}[section]
\newtheorem{proposition}[theorem]{Proposition}
\newtheorem{corollary}[theorem]{Corollary}
\newtheorem{definition}[theorem]{Definition}
\newtheorem{example}[theorem]{Example}
\newtheorem{remark}[theorem]{Remark}
\newtheorem{lemma}[theorem]{Lemma}
\begin{document}

\maketitle

\begin{abstract}
   For $n\geq 3$ and $r\geq n$, we show that there are rank-$r$ vector bundles on $\Pp^n$ with arbitrary homological dimension. 
   We apply the Bernstein-Gel'fand-Gel'fand correspondence to translate the vector bundle question into a problem on modules over the exterior algebra.
   Then, we use linear algebra to construct the desired modules.
\end{abstract}

\section{Introduction}
The classification of algebraic vector bundles on projective spaces has been an active field, with interesting open problems such as Hartshorne's conjecture on low rank non splitting bundles \cite[p.1]{Hartshorne_conjecture}.
As with other areas of mathematics, invariants that partition the objects of study into subgroups are helpful for classification.
One invariant for vector bundles is the homological dimension introduced by Bohnhorst and Spindler in \cite{spindler_stability}.
\begin{definition}
    A resolution of a vector bundle $\mathcal{F}$ on projective space is a chain complex of sheaves
    $$\mathcal{C}^{\bullet} : \,\,\,\cdots \longrightarrow \mathcal{C}^{-1} \longrightarrow \mathcal{C}^0 \longrightarrow 0$$ such that each $\mathcal{C}^i$ is a direct sum of line bundles, $H^i(\mathcal{C}) = 0$ for $i\neq 0$ and $H^0(\mathcal{C}) = \mathcal{F}$.
    The homological dimension of $\mathcal{F}$, denoted by $hd(\mathcal{F})$ is the shortest length a resolution of $\mathcal{F}$ can have.
\end{definition}
\begin{remark}
    We are using cohomological grading for convenience.
\end{remark}
The homological dimension appears in the study of low rank non splitting bundles due to Corollary 1.7 of \cite{spindler_stability}, which relates the rank $\text{rk}(\mathcal{F})$ with $\text{hd}(\mathcal{F})$ for any non splitting bundle $\mathcal{F}$ on $\Pp^n$ through the inequality
$$
\text{rk}(\mathcal{F}) \geq n+1 - \text{hd}(\mathcal{F}).
$$
Using Horrock's splitting criterion, one can show that for $\mathcal{F}$ on $\Pp^n$, $\text{hd}(F) = 0,1,...,$ or $n-1$, where $\text{hd}(\mathcal{F}) = 0$ is equivalent to $\mathcal{F}$ being a direct sum of line bundles \cite{spindler_stability}.
In 2012, Jardim and Prata constructed rank-$n$ simple vector bundles on $\Pp^n$ of homological dimensions $1,2,...,$ and $n-1$, proving that for rank-$n$ bundles, all homological dimensions are possible \cite[Theorem 1.3]{Jardim}.
They proved this result by performing induction on the homological dimension using  Theorem 4.3 in Brambilla's \cite{brambilla2007cokernel}.
However, it remained unclear whether the same was true for ranks other than $n$.

One useful technique for studying sheaves on projective spaces is the Bernstein-Gel'fand-Gel'fand correspondence introduced in \cite{Bernshtein1978AlgebraicBO}, which given a $\bigwedge V-$module $P$ where $V$ is a finite dimensional vector space, constructs a chain complex $L(P)$ of free $\text{Sym} V^*$-modules.
Sheafification produces a complex $\tilde{L}(P)$ of vector bundles on $\Pp^n = \text{Proj}(\text{Sym} V^*)$.
Therefore, complexes of sheaves become related to modules over $\bigwedge V$.
For instance, Eisenbud, Fløystad, and Schreyer constructed the Beilinson monad using the correspondence \cite{Eisenbud_floystad}.
Moreover, by restricting to certain kinds of $\bigwedge V$-modules, one can ensure that $\tilde{L}(P)$ has cohomologies that are vector bundles.
Coandă and Trautmann considered complexes of $\bigwedge V$ modules that, through the BGG correspondence, gave rise to stable vector bundles \cite{Coanda}. Alternatively, results on vector bundles can produce insights on modules over $\bigwedge V$: Popa and Lazarsfeld discovered Hodge number inequalities by passing cohomology rings through the BGG correspondence  \cite{Popa_bgg}.
In this paper, we are able to generalize the result of \cite{Jardim} to arbitrary ranks larger than or equal to $n$ using the BGG correspondence.
\begin{theorem}\label{thm:arb_hd}
Let $n\geq 3$ and let $k$ be an algebraically closed field.
For $l = 1,2,...,n-1$ and any $r\geq n$, there exists a simple vector bundle of rank $r$ and homological dimension $l$ on $\Pp^n_k$, the $n$-dimensional projective space over $k$.
\end{theorem}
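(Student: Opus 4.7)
The plan is to use the BGG correspondence to translate the construction of $\mathcal{F}$ into the construction of a graded module $P$ over the exterior algebra $\bigwedge V$, where $V\cong k^{n+1}$ and $\Pp^n=\text{Proj}(\text{Sym}\,V^*)$. If $P$ is concentrated in graded degrees $0,1,\ldots,l$, then $\tilde L(P)$ is a length-$l$ complex of direct sums of line bundles on $\Pp^n$; if additionally $\tilde L(P)$ is exact except at the top and its cohomology there is locally free, then $H^0(\tilde L(P))$ is a vector bundle whose homological dimension is at most $l$. A minimality check then upgrades this to equality.

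The first step is to encode the three desired invariants of $\mathcal{F}$ as purely algebraic conditions on $P$. Exactness away from the top term reduces to rank conditions on the linear differentials of $P$, i.e., on the maps $V\otimes P_i\to P_{i-1}$ given by the $\bigwedge V$-action. The rank $\text{rk}(\mathcal{F})$ then becomes an explicit alternating sum of the $\dim P_i$ computed from the Euler characteristic of the resolution, which leaves enough flexibility to target any $r\geq n$ by adjusting the dimensions of the graded pieces. Simplicity $\End(\mathcal{F})=k$ translates, through the derived equivalence underlying BGG, into the statement that every graded $\bigwedge V$-endomorphism of $P$ commuting with the differentials is a scalar. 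Minimality, which forces the homological dimension to equal $l$ rather than being strictly smaller, amounts to requiring that the differentials of $P$ have no constant component.

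Next I would construct $P$ explicitly by choosing a small base module whose resolution realizes a simple bundle of rank $n$, and then inflating the graded pieces $P_i$ by extra basis vectors whose action under elements of $V$ is specified by a family of matrices built from generic scalars. Tuning the $\dim P_i$ against the alternating-sum rank formula lets me hit any prescribed $r\geq n$ without violating the support condition $P_j=0$ for $j\notin\{0,\ldots,l\}$. I would then verify that for this construction: (i) the maps induced on $\text{Sym}\,V^*$-modules are injective past the top term, giving exactness; (ii) the cokernel at the top is locally free, which can be checked via a Fitting-ideal computation on the matrices representing the top differential; and (iii) the minimality condition holds by construction.

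The hard part will be verifying simplicity while the rank grows, because naive ways of enlarging $P$ tend to introduce decomposable summands or new endomorphisms. My expectation is that simplicity is a Zariski-open condition on the structure constants once the module is sufficiently generic, so the strategy is to exhibit enough commutation constraints with the chosen multiplication maps to force any graded endomorphism of $P$ to be diagonal in a specific basis, and then scalar. Combining this genericity argument with the combinatorial freedom to choose $\dim P_i$ produces the required family of modules, which under BGG yield the simple rank-$r$ bundles of homological dimension $l$ claimed by the theorem.
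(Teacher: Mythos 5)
Your high-level strategy coincides with the paper's: pass through BGG, build a graded $\bigwedge V$-module supported in degrees $0,\dots,l$, and read off rank, homological dimension, and simplicity from the module. But as written the proposal defers exactly the two steps that carry all the difficulty, and the way you phrase them suggests arguments that would not go through as stated.

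First, exactness of $\tilde L(P)$ away from the top is \emph{not} a rank condition on the differentials of $L(P)$: the fiber of the differential over a point $[v]\in\Pp^n$ is multiplication by $v$ on $P$, so you need the complex $(P_\bullet,\cdot v)$ to be exact below the top degree for \emph{every} $v\in V$, not for a generic one. "Inflating the graded pieces by extra basis vectors with generic structure constants" therefore risks destroying exactness at some point of $\Pp^n$, and a Fitting-ideal check only at the top differential does not detect this. The paper avoids the problem structurally: it starts from $P_0\otimes\bigoplus_{i=0}^l\bigwedge^iV$, which is faithful for free, and modifies only the top graded piece by passing to a quotient $P/L$; faithfulness of the quotient then reduces to a single dimension count ($L$ must miss the image of $P_{l-1}\otimes V\to P_l$, which a general $L$ of the allowed dimension does). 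Note also that the quotient direction, not inflation, is what gives the range $r\ge n$: the rank is $\chi_l-\dim L$, and shrinking the top piece is compatible with faithfulness, whereas enlarging lower pieces is not obviously so.

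Second, for simplicity you correctly identify that openness of the condition is not enough --- you must exhibit one module in the open locus --- but you stop at the expectation that "enough commutation constraints" can be imposed. This is the actual content of the paper's Section 4: it isolates the condition that the quotienting subspace $L\subset P_0\otimes\bigwedge^lV$ "anchors" $P_0$ (any $\phi\otimes 1$ preserving $L$ is scalar), shows this forces $\mathrm{End}(P/L)=k$, and then proves nonemptiness of the anchoring locus by an explicit tensor built from two matrices generating $\mathrm{End}(k^d)$ via Burnside's theorem. Without some such explicit witness your genericity claim is unsupported. Finally, the passage from $\mathrm{End}(P)=k$ to $\mathrm{End}(\mathcal F)=k$ is not immediate from the derived equivalence either; the paper needs the separate fact that $L(P)$ is the minimal free resolution of $\Gamma_*(\mathcal F)_{\ge -c}$ to lift a sheaf endomorphism to a module endomorphism of $P$. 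So the skeleton is right, but the proposal has genuine gaps at the faithfulness, simplicity, and module-to-sheaf steps.
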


This paper is organized as follows. Section \ref{sec:preliminaries} introduces the necessary definitions.
Section \ref{sec:lemmas} proves a lemma about linear resolutions that reduces the simplicity of vector bundles to the simplicity of modules over $\bigwedge V$.
Section \ref{sec:construction} gives an explicit construction of the vector bundles used in Theorem \ref{thm:arb_hd}.

\section{Preliminaries} \label{sec:preliminaries}
\subsection{BGG Complex}
Let $k$ be an algebraically closed field.
Let $V$ be an $n+1$-dimensional $k$-vector space.
Let $E = \bigwedge V$ be its exterior algebra.
In this paper, for convenience, we assume $E$ is graded positively, i.e., the degree of any $v\in V$ is $1$.
Let $P$ be a graded left $E$-module.
Let $\{e_0,...,e_n\}\subset V$ be an arbitrary basis and $\{x_0,...,x_n\}\subset V^*$ be the corresponding dual basis.
Let $S = \text{Sym} V^* = k[x_0,...,x_n]$.
The following definition of the BGG complex is from \cite{Eisenbud_syzygies}.
\begin{definition}
    Let $P= \oplus_{i\in\mathbb{Z}} P_i$ be a graded $E$-module.
    Then, the BGG complex $L(P)$ is given by
    $$ \cdots \longrightarrow S[i]\otimes_kP_i \longrightarrow S[i+1]\otimes_kP_{i+1}\longrightarrow \cdots$$
    where the differential is defined by $$1\otimes p \mapsto \sum_{i=0}^n x_i\otimes e_ip.$$
\end{definition}
We call a chain complex $$\cdots\longrightarrow M^i\longrightarrow M^{i+1}\longrightarrow\cdots$$ of $S$-modules a linear free complex if each $M^i$ appearing in it is free and the generators of $M^i$ have degree $-i$.
The BGG correspondence then says the following.
\begin{proposition} \label{prop:equiv_cat}
    $L$ is an equivalence of categories from graded $E$-modules to linear free complexes over $S$.
\end{proposition}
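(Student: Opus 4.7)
The plan is to exhibit an explicit quasi-inverse functor $R$ to $L$ and verify that $R \circ L$ and $L \circ R$ are naturally isomorphic to the identity. I would begin by confirming $L$ is well-defined: each term $S[i] \otimes_k P_i$ is free with generators in degree $-i$, so linearity is built in, and a direct computation gives $d^2(1\otimes p) = \sum_{j,k} x_j x_k \otimes e_k e_j p$. Because $x_j x_k$ is symmetric in $(j,k)$ while $e_k e_j + e_j e_k = 0$ and $e_j^2 = 0$ in $E$, this sum vanishes, so $L(P)$ is a genuine linear free complex.

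Next I would construct $R$. Given any linear free complex $M^\bullet$, let $W_i \subset M^i$ denote the degree-$(-i)$ component, so that $M^i \cong S[i] \otimes_k W_i$ canonically as $S$-modules. Since $d^i$ is $S$-linear and preserves degree, its restriction to $W_i$ lands in $(M^{i+1})_{-i} = V^* \otimes_k W_{i+1}$, and so corresponds under the natural isomorphism $\Hom_k(W_i, V^* \otimes W_{i+1}) \cong \Hom_k(V \otimes W_i, W_{i+1})$ to a multiplication map $V \otimes W_i \to W_{i+1}$. Setting $R(M^\bullet) = \bigoplus_i W_i$ with these multiplications, I would show that $d^2 = 0$ forces the $V$-action to extend (necessarily uniquely) to a graded $E$-module structure. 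On morphisms, both $L$ and $R$ act by restriction/extension of scalars in the obvious way, and the compositions $R \circ L$ and $L \circ R$ then literally recover the starting data from the generator pieces and the degree-$(-i)$ restriction of the differential, giving the required natural isomorphisms.

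The main obstacle will be the passage from $d^2 = 0$ to the exterior algebra relations inside $R$. The point is that the degree-$(-i)$ part of $d^2(w)$ for $w \in W_i$ lives in $S_2 \otimes W_{i+2} = \mathrm{Sym}^2 V^* \otimes W_{i+2}$, and expanding it produces the symmetrization of the bilinear form $(v_1, v_2) \mapsto v_2 \cdot (v_1 \cdot w)$; its vanishing is exactly the requirement that the $V$-action factor through $\bigwedge V$. Once this identification of relations is made cleanly, functoriality and the two natural isomorphisms are formal, since $R$ extracts precisely the data used by $L$ to build the complex and vice versa.
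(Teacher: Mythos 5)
Your proof is correct and is the standard argument: the paper itself gives no proof of this proposition, deferring to the cited reference (Eisenbud, \emph{The Geometry of Syzygies}), and the quasi-inverse you construct --- extracting the generator spaces $W_i$ and reading the degree-$(-i)$ part of the differential as a map $V\otimes W_i\to W_{i+1}$, with $d^2=0$ encoding exactly the relations $e_i^2w=0$ and $e_ie_jw+e_je_iw=0$ --- is precisely the proof found there. One small point worth making explicit when you write it up: reading off the coefficients of the monomials $x_i^2$ and $x_ix_j$ separately yields both the square-zero and the anticommutation relations, so the identification of the $V$-action with a $\bigwedge V$-action goes through in every characteristic, including $2$.
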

This version of the correspondence is presented in \cite{Eisenbud_syzygies}.
It is worth noting that the BGG correspondence can also be stated as an equivalence of categories between the derived category of $S$-modules and the derived category of $E$-modules as shown in Corollary 2.7 of \cite{Eisenbud_floystad}.
However, we will not be needing this derived equivalence in our paper.

\subsection{BGG-Sheaves and Faithful Modules}
In the rest of the paper, we assume that $0 = \min_i(P_i\neq 0)$, i.e., the module ``starts" at the $0$th graded piece.
Also, the $E$-modules considered will all be finitely generated.
We can sheafify $L(P)$ to produce the complex $\tilde{L}(P)$ given by 
$$\cdots\longrightarrow P_i \otimes_{k} \mathcal{O}_{\Pp^n}(i) \xrightarrow[]{\,d_i\,} P_{i+1} \otimes_{k} \mathcal{O}_{\Pp^n}(i+1) \longrightarrow \cdots $$
Following \cite{Popa_bgg}, we call the cohomology at the last nonzero term the BGG-sheaf.
\begin{definition}
    The BGG-sheaf refers to $H^c(\tilde{L}(P))$ where $c=\text{max}_i(P_i\neq 0)$.
\end{definition}
One can show using Castelnuovo-Mumford regularity that every vector bundle can be realized as the BGG-sheaf of some module $P$.
For our application on homological dimensions, we care about the case when the BGG-sheaf is a vector bundle and $\tilde{L}(P)$ provides a resolution for it. 
To understand what this means for the module $P$, we will use a basis-free description of the differential maps $d_i$s appearing in $\tilde{L}(P)$.
Let $$d_i(-i-1)
:P_i \otimes_{k} \mathcal{O}_{\Pp^n}(-1) \to P_{i+1} \otimes_{k} \mathcal{O}_{\Pp^n}$$ be the map obtained by twisting $d_i$.
Over $[v] \in \Pp^n$, the fiber has the form $$\left(P_i \otimes_{k} \mathcal{O}_{\Pp^n}(-1)\right)|_{[v]} = P_i\otimes kv$$ because $\mathcal{O}_{\Pp^n}(-1)$ is the tautological bundle.
In this description, one can easily check that the map
\begin{align*}
    d_i(-i-1)|_{[v]}: P_i\otimes kv \to& P_{i+1}
\end{align*}
is given by
$$p\otimes v \mapsto vp$$
where we are invoking the $E$-module action.
In fact, this is the definition given in the original BGG paper \cite{Bernshtein1978AlgebraicBO}.
With this understanding of the differential map, we arrive at the following definition.
\begin{definition}
$P$ is faithful if for all $v\in V$, the following sequence of $k$-vector spaces 
$$\cdots\longrightarrow P_{i-1} \xrightarrow[]{\,\cdot v\,}P_i  \xrightarrow[]{\,\cdot v\,} P_{i+1}\longrightarrow \cdots$$ 
is exact at all $i < c$ 
where $P_i$ is the $i$th graded piece of $P$, $c = \max_i(P_i\neq 0)$, and $\cdot v$ denotes multiplication by $v$ due to the $E$-module structure.
\end{definition}
Because the differential map is given by $p\otimes v\mapsto vp$, faithfulness corresponds to exactness.
\begin{proposition} \label{prop:faithful_exact}
    $P$ is faithful if and only if the BGG sheaf is a vector bundle and $\tilde{L}(P)$ gives a resolution for it.
\end{proposition}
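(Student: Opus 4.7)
The plan is to reduce everything to a fiberwise analysis at each $[v]\in\Pp^n$. As spelled out just before the statement, twisting $d_i$ by $\Oo_{\Pp^n}(-i-1)$ identifies its fiber at $[v]$ with the map $P_i\otimes kv\to P_{i+1}$, $p\otimes v\mapsto vp$. Up to the identification $kv\cong k$, the fiber complex of $\tilde L(P)$ at $[v]$ therefore coincides with the multiplication-by-$v$ sequence from the definition of faithfulness, so the proposition amounts to matching fiberwise exactness of $\tilde L(P)$ with sheaf-level exactness plus local freeness of the terminal cohomology.

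For the direction ``$\tilde L(P)$ is a resolution of a vector bundle $\Rightarrow P$ is faithful,'' I would descend through the resolution to show every image sheaf is a vector bundle, then exploit the fact that short exact sequences of vector bundles are locally split. Since $H^c(\tilde L(P))$ is locally free by hypothesis, the sequence $0\to\im d_{c-1}\to P_c\otimes\Oo_{\Pp^n}(c)\to H^c(\tilde L(P))\to 0$ makes $\im d_{c-1}$ locally free. Using the resolution identity $\ker d_i=\im d_{i-1}$, induct downward to see every $\im d_i$ is locally free, so each $0\to\im d_{i-1}\to P_i\otimes\Oo_{\Pp^n}(i)\to\im d_i\to 0$ is a locally split short exact sequence of vector bundles. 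Restricting to the fiber at $[v]$ and splicing yields exactness of the multiplication-by-$v$ complex at every $i<c$, which is faithfulness.

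For the reverse direction, the key is rank constancy. Let $r_i(v)=\dim_k\im(d_i|_{[v]})$; applying rank-nullity to exactness at position $i$ yields the recursion $r_i(v)+r_{i-1}(v)=\dim_k P_i$ for $i<c$, which together with $r_{-1}(v)=0$ determines $r_i(v)=\sum_{j=0}^i(-1)^{i-j}\dim_k P_j$, a quantity independent of $v$. A morphism of vector bundles of constant fiber rank has locally free image and cokernel (by the local normal form for constant-rank matrices), so every $\im d_i$ and $\coker d_i$ with $i\le c-1$ is a vector bundle. The sequences $0\to\im d_{i-1}\to P_i\otimes\Oo_{\Pp^n}(i)\to\im d_i\to 0$ are then fiberwise exact, hence exact as sheaves, and splicing gives sheaf-exactness of $\tilde L(P)$ at every $i<c$. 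In particular $H^c(\tilde L(P))=\coker d_{c-1}$ is a vector bundle and $\tilde L(P)$ resolves it.

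The main obstacle is cleanly bridging fiber exactness and sheaf exactness: fiberwise exactness of a complex of vector bundles is strictly stronger than sheaf exactness in general, and passage in either direction hinges on local freeness of the images, which can fail when ranks jump. The technical heart of the argument is the rank-constancy calculation above, which turns the combinatorial faithfulness hypothesis on $P$ into the geometric statement that every differential has constant rank, and hence locally free image and cokernel.
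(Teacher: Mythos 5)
Your argument is correct. The paper actually states this proposition without proof, treating it as immediate from the remark that the fiber of $d_i$ at $[v]$ is multiplication by $v$; your write-up supplies exactly the details that make this rigorous, and it correctly isolates the one genuinely nontrivial point, namely that the constant-rank computation $r_i(v)=\sum_{j=0}^i(-1)^{i-j}\dim_k P_j$ is what lets you pass between fiberwise exactness and sheaf-level exactness (via locally free images and locally split short exact sequences in the other direction).
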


\subsection{Homological Dimension and Cohomology}
A key property of the homological dimension is that it can be inferred from the cohomologies of a bundle and its twists \cite[Proposition 1.4]{spindler_stability}.
\begin{proposition} \label{prop:hd1}
For a vector bundle $\mathcal{F}$, $\text{hd}(\mathcal{F}) \leq d$ if and only if $$\bigoplus_{i\in \mathbb{Z}}H^q(\Pp^n, \mathcal{F}(i)) = 0$$
for all $q$ such that $1\leq q \leq n-d-1$.
\end{proposition}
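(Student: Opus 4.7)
The plan is to prove the two implications separately, combining iterated long exact sequences in the forward direction with an induction on $d$ bootstrapped from Horrocks' splitting criterion in the reverse.

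For the forward direction, I would fix a resolution $0 \to \mathcal{C}^{-d} \to \cdots \to \mathcal{C}^0 \to \mathcal{F} \to 0$ of length $d$ and decompose it into short exact sequences $0 \to K^{-j-1} \to \mathcal{C}^{-j} \to K^{-j} \to 0$, where $K^0 = \mathcal{F}$ and $K^{-j} = \operatorname{image}(\mathcal{C}^{-j} \to \mathcal{C}^{-j+1})$. Because each $\mathcal{C}^{-j}$ is a direct sum of line bundles, $H^q(\mathcal{C}^{-j}(i)) = 0$ whenever $1 \leq q \leq n-1$, so chasing the associated long exact sequences one step at a time yields $H^q(\mathcal{F}(i)) \cong H^{q+d}(K^{-d}(i))$ in the range $1 \leq q \leq n-d-1$. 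The leftmost arrow $\mathcal{C}^{-d} \to \mathcal{C}^{-d+1}$ is injective, giving $K^{-d} \cong \mathcal{C}^{-d}$, so the right-hand side also vanishes in the stated range, delivering the cohomology vanishing for $\mathcal{F}$.

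For the reverse direction, I would induct on $d$. The base case $d = 0$ is precisely Horrocks' splitting criterion, invoked as a black box. For the inductive step, I would choose a surjection $\mathcal{C}^0 = \bigoplus_j \mathcal{O}_{\Pp^n}(-a_j) \twoheadrightarrow \mathcal{F}$ such that $H^0(\mathcal{C}^0(i)) \to H^0(\mathcal{F}(i))$ is surjective for every $i \in \mathbb{Z}$, obtained by sheafifying any surjection of graded $S$-modules onto $\bigoplus_i H^0(\mathcal{F}(i))$. Letting $\mathcal{F}_1$ denote the kernel (automatically locally free because $\mathcal{F}$ is), the long exact sequence together with the $H^0$-surjectivity shows that $H^q(\mathcal{F}_1(i)) = 0$ for $1 \leq q \leq n-d$, matching the hypothesis of the proposition at parameter $d-1$. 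The inductive hypothesis then gives $\text{hd}(\mathcal{F}_1) \leq d-1$, and splicing a length-$(d-1)$ resolution of $\mathcal{F}_1$ with $\mathcal{C}^0 \twoheadrightarrow \mathcal{F}$ produces a resolution of $\mathcal{F}$ of length at most $d$.

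The main obstacle I anticipate is the $q = 1$ step in the inductive argument: the cohomology long exact sequence by itself only delivers vanishing of $H^q(\mathcal{F}_1(i))$ for $q \geq 2$ from the hypothesis on $\mathcal{F}$, so the surjection $\mathcal{C}^0 \twoheadrightarrow \mathcal{F}$ must be engineered to kill the $H^1$ contribution, which is precisely what the global-sections-surjective-at-every-twist condition accomplishes. The only other nontrivial ingredient is Horrocks' splitting theorem, which handles the $d = 0$ base case.
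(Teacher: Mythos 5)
Your argument is correct and is essentially the proof the paper relies on: the paper does not reprove this statement but cites Bohnhorst--Spindler and explicitly describes their proof as a characteristic-blind induction on $d$ with Horrocks' splitting criterion as the base case, which is exactly your reverse direction, while your forward direction is the standard long-exact-sequence chase through the syzygy kernels of a length-$d$ resolution. The only point worth spelling out is that the surjection $\mathcal{C}^0 \twoheadrightarrow \mathcal{F}$ inducing surjections $H^0(\mathcal{C}^0(i)) \to H^0(\mathcal{F}(i))$ for every $i$ exists because $\Gamma_*(\mathcal{F})$ is a finitely generated graded $S$-module, and you have correctly identified this as the ingredient that kills $H^1(\mathcal{F}_1(i))$ and lets the induction close.
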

\begin{remark}
    By plugging in $d = n-1$, we see that $\text{hd}(\mathcal{F})\leq n-1$ is always true.
    This is why the homological dimension for a vector bundle on $\Pp^n$ can only be $0,1,...,$ or $n-1$.
\end{remark}
It is worth noting that throughout \cite{spindler_stability}, the authors assumed $k$ was characteristic 0. 
Nonetheless, the proof of Proposition \ref{prop:hd1} only involved performing a characteristic-blind induction with the base case being Horrock's splitting criterion. 
Given that Horrock's criterion holds in positive characteristics \cite[Theorem 3.1]{abe_splitting}, Proposition \ref{prop:hd1} also holds.

Similar to the module case, we call a resolution of sheaves 
$$
\cdots \longrightarrow \mathcal{F}^{-1} \longrightarrow \mathcal{F}^0 \longrightarrow 0
$$
a linear resolution if each $\mathcal{F}^i$ is a direct sum of copies of $\mathcal{O}_{\Pp^n}(i)$.
Once a linear resolution is given for a vector bundle, no shorter resolution can exist.
\begin{proposition}\label{prop:lin_res_hd}
If a vector bundle has a linear resolution of length $l$ where $l \leq n-1$, then its homological dimension is $l$.
\end{proposition}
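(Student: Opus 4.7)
The plan is to use Proposition \ref{prop:hd1} as a lower-bound criterion. The existence of the linear resolution of length $l$ already gives $\text{hd}(\mathcal{F}) \leq l$, so the content is to rule out any shorter resolution. If $\text{hd}(\mathcal{F}) \leq l-1$, then Proposition \ref{prop:hd1} applied with $d = l-1$ forces $H^q(\Pp^n, \mathcal{F}(i)) = 0$ for every $q$ with $1 \leq q \leq n - l$ and every $i \in \mathbb{Z}$; the hypothesis $l \leq n-1$ guarantees $n - l \geq 1$, so $q = n - l$ lies in this range. Hence it suffices to exhibit a single twist $i_0$ with $H^{n-l}(\mathcal{F}(i_0)) \neq 0$.

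To produce such a class, I would split the resolution $0 \to \mathcal{C}^{-l} \to \cdots \to \mathcal{C}^0 \to \mathcal{F} \to 0$ into short exact sequences $0 \to G_{s+1} \to \mathcal{C}^{-s} \to G_s \to 0$ with $G_0 = \mathcal{F}$ and $G_l = \mathcal{C}^{-l}$, twist by $i_0$, and splice the resulting long exact sequences. The $s$-th connecting map $H^{n-l+s}(G_s(i_0)) \to H^{n-l+s+1}(G_{s+1}(i_0))$ is sandwiched between $H^{n-l+s}(\mathcal{C}^{-s}(i_0))$ and $H^{n-l+s+1}(\mathcal{C}^{-s}(i_0))$. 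Since $\mathcal{C}^{-s}(i_0)$ is a direct sum of copies of $\Oo(i_0 - s)$ and line bundles on $\Pp^n$ have no middle cohomology, both flanking terms vanish as soon as neither index $n-l+s$ nor $n-l+s+1$ equals $0$ or $n$. For $0 \leq s \leq l-1$ the constraint $1 \leq l \leq n-1$ makes every such index lie strictly inside $(0,n)$, with the single exception $s = l-1$, where $H^n(\mathcal{C}^{-l+1}(i_0))$ must still be killed by hand.

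I would then fix $i_0 = l - n - 1$. The exceptional term becomes $H^n(\Oo(-n))^{a_{-l+1}} = 0$, so the iterated connecting maps compose to an isomorphism
\[
H^{n-l}(\mathcal{F}(i_0)) \;\cong\; H^n(\mathcal{C}^{-l}(i_0)) \;=\; H^n(\Oo(-n-1))^{a_{-l}} \;\cong\; k^{a_{-l}},
\]
where $a_{-l} > 0$ because the resolution has length exactly $l$. This gives the required nonvanishing and closes the argument. The main obstacle is purely bookkeeping: the twist $i_0 = l - n - 1$ is forced, since we need $\Oo(i_0 - l) = \Oo(-n-1)$ at the far end to contribute a nonzero top cohomology while its neighbor $\Oo(i_0 - l + 1) = \Oo(-n)$ must still lie in the vanishing range $m \geq -n$ for $H^n$. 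Any other twist breaks one of these two boundary conditions.
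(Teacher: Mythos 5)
Your proposal is correct and follows essentially the same route as the paper: the upper bound is immediate from the given resolution, and the lower bound comes from exhibiting the nonvanishing class $H^{n-l}(\Pp^n,\mathcal{F}(l-n-1))\neq 0$ by splicing the long exact sequences of the split-up resolution, then invoking Proposition \ref{prop:hd1}. You simply make explicit the inductive cohomology chase and the choice of twist that the paper leaves to the reader.
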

\begin{proof}
Without loss of generality, $\mathcal{F}$ has a resolution
$$0 \longrightarrow \bigoplus \mathcal{O}_{\Pp^n}(-l)\longrightarrow \cdots\longrightarrow \bigoplus\mathcal{O}_{\Pp^n} \longrightarrow\mathcal{F} \longrightarrow 0$$
By definition, the homological dimension is less than or equal to $l$.
To show that it can be no less than $l$, we observe that $H^{n-l}(\Pp^n, \mathcal{F}(l-(n+1))) \neq 0$ by using the long exact sequence of cohomologies induced by the short exact sequence of sheaves inductively, so Proposition \ref{prop:hd1} gives us the result.
\end{proof}

\section{Lemma about Linear Resolutions} \label{sec:lemmas}
In this section, we prove a lemma that allows us to translate the simplicity of certain $E$-modules to the simplicity of their BGG-sheaves.
We will make use of the functor $\Gamma_*$ that turns sheaves into modules.
\begin{definition}
    The functor $\Gamma_*: \text{QC}(\Pp^n) \to S\text{Mod}^{\,gr}$ from quasi-coherent sheaves to graded $S$-modules is defined by $\Gamma_*(\mathcal{F}) = \bigoplus_{i\in \mathbb{Z}} H^0(\Pp^n, \mathcal{F}(i))$. The $i$th graded piece of $\Gamma_*(\mathcal{F})$ is defined to be $H^0(\Pp^n, \mathcal{F}(i))$.
\end{definition}
We will use $\tilde{M}$ or $M^\sim$ to denote the sheaf on $\Pp^n$ obtained by sheafifying a $S$-module $M$.
One property about Castelnuovo-Mumford regularity that will be crucial to us is the following \cite[Proposition 4.16]{Eisenbud_syzygies}.
\begin{proposition} \label{prop:regular_surjective}
    Let $M$ be a finitely generated graded $S$-module that is $d$-regular.
    Then the canonical map 
    $$
    M_d \to \Gamma_*(\tilde{M})_d
    $$
    is surjective.
\end{proposition}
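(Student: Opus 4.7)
The plan is to reduce surjectivity to the vanishing of a single graded piece of local cohomology. For any finitely generated graded $S$-module $M$ with irrelevant ideal $\mathfrak{m} = (x_0, \ldots, x_n)$, there is a standard four-term exact sequence of graded $S$-modules
$$
0 \longrightarrow H^0_{\mathfrak{m}}(M) \longrightarrow M \longrightarrow \Gamma_*(\tilde{M}) \longrightarrow H^1_{\mathfrak{m}}(M) \longrightarrow 0,
$$
where $H^i_{\mathfrak{m}}$ denotes local cohomology at $\mathfrak{m}$. This is obtained by comparing the definition of sheaf cohomology on $\Pp^n = \text{Proj}\,S$ via the standard affine cover to the local-cohomology functors on $S$-modules, and it identifies the cokernel of the canonical map $M \to \Gamma_*(\tilde{M})$ in each degree. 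Taking the degree-$d$ piece, the cokernel of $M_d \to \Gamma_*(\tilde{M})_d$ is precisely $H^1_{\mathfrak{m}}(M)_d$, so the claim reduces to showing $H^1_{\mathfrak{m}}(M)_d = 0$.

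The next step is to invoke the local-cohomology characterization of Castelnuovo-Mumford regularity: $M$ is $d$-regular if and only if $H^i_{\mathfrak{m}}(M)_e = 0$ for every $i \geq 0$ and every $e \geq d - i + 1$. Specializing to $i = 1$ and $e = d$ gives $H^1_{\mathfrak{m}}(M)_d = 0$, which combined with the previous paragraph finishes the proof.

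The only delicate step is establishing the four-term exact sequence, which requires being careful about the identification between sheaf cohomology on $\Pp^n$ and local cohomology at the irrelevant ideal; this is a standard but nontrivial input from algebraic geometry, and it is what really powers the statement. If one wishes to avoid local cohomology, an alternative route is induction on the number of generators of $M$: pick a generator in top degree, form a short exact sequence $0 \to M' \to M \to k(-e) \to 0$, and apply the long exact sequences in sheaf cohomology and in $\Gamma_*$, checking that regularity propagates to $M'$ so that the induction closes. Either approach reduces the proof to a bookkeeping exercise once the regularity condition is written in its cohomological form.
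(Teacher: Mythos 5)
This proposition is not proved in the paper at all: it is imported verbatim as \cite[Proposition 4.16]{Eisenbud_syzygies}, so the only thing to compare your argument against is the standard textbook proof --- and yours is essentially that proof. Both of your inputs are correct: the four-term exact sequence
$$
0 \longrightarrow H^0_{\mathfrak{m}}(M) \longrightarrow M \longrightarrow \Gamma_*(\tilde{M}) \longrightarrow H^1_{\mathfrak{m}}(M) \longrightarrow 0
$$
is the standard comparison between local cohomology at the irrelevant ideal and sheaf cohomology on $\Pp^n$ (see the appendix on local cohomology in \cite{Eisenbud_syzygies}), and the local-cohomology characterization of $d$-regularity applied with $i=1$, $e=d \geq d-1+1$ gives $H^1_{\mathfrak{m}}(M)_d = 0$, hence the surjectivity. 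One dependency is worth making explicit: the paper never defines ``$d$-regular,'' and in the one place it verifies regularity (the proof of Lemma \ref{lem: linear res}, where $-c$-regularity is deduced from the existence of a linear free resolution) it is using the syzygy-theoretic definition. Your proof uses the local-cohomology definition, and the equivalence of the two is itself a theorem (Eisenbud--Goto; Theorem 4.3 of \cite{Eisenbud_syzygies}), so your argument silently leans on that equivalence and should cite it rather than treat it as a restatement.

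The parenthetical ``alternative route'' at the end is the only shaky part. If $g$ is a minimal generator of top degree $e$ and $M'$ is the submodule generated by the remaining generators, then $M/M'$ is a cyclic module of the form $(S/I)(-e)$, not $k(-e)$; if you instead enlarge $M'$ by $\mathfrak{m}g$ so that the quotient really is $k(-e)$, then $M'$ need not have fewer generators than $M$, so an induction on the number of generators does not obviously close. Since this aside plays no role in your actual argument, it does not affect correctness, but it should be repaired or removed.
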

In general, an exact sequence of sheaves does not produce an exact sequence of global sections.
However, knowing the exactness of $\tilde{L}(P)$ says a lot about the exactness of $L(P)$:
\begin{lemma} \label{lem: linear res}
Let $P = \bigoplus_{i=0}^c P_i$ be an $E$-module such that $c:=\max_i(P_i\neq 0)\in \{0,1,...,n\}$ and $H^i(\tilde{L}(P)) = 0$ for all $i\neq c$.
    Then, $L(P)$ is the minimal free resolution for $\Gamma_*(\mathcal{F})_{\geq -c}$, where $\mathcal{F}$ is the BGG-sheaf of $P$.
\end{lemma}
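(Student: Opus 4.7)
The plan is to compute the cohomology of the complex of graded $S$-modules $L(P)$ one graded piece at a time, using the two hypercohomology spectral sequences of the twisted sheaf complex $\tilde{L}(P)(j)$, and then argue minimality for free at the end.

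For each fixed $j \geq -c$, the first hypercohomology spectral sequence has $E_2^{p,q} = H^q(\mathcal{H}^p(\tilde{L}(P)(j)))$; by hypothesis only the row $p = c$ contributes, so it degenerates at $E_2$ and yields $\mathbb{H}^{c+q}(\tilde{L}(P)(j)) = H^q(\mathcal{F}(j))$. The second hypercohomology spectral sequence has $E_2^{p,q} = H^p$ of the complex $\{P_i \otimes H^q(\mathcal{O}_{\Pp^n}(i+j))\}_i$; since $0 \leq i \leq c \leq n$ and $j \geq -c$ together force $i + j \in [-n, \infty)$, Bott vanishing gives $H^q(\mathcal{O}(i+j)) = 0$ for every $0 < q \leq n$, so only the row $q = 0$ survives and yields $\mathbb{H}^n = H^n(L(P))_j$. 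Comparing abutments gives $H^n(L(P))_j \cong H^{n-c}(\mathcal{F}(j))$ for every $j \geq -c$, which vanishes for $n < c$ and equals $\Gamma_*(\mathcal{F})_j$ for $n = c$. In the remaining range $j < -c$ each term $P_i \otimes S_{i+j}$ of the degree-$j$ piece of $L(P)$ vanishes because $i + j < 0$, so that range contributes nothing.

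Putting these degreewise computations together, $L(P)$ is exact except at position $c$, where $H^c(L(P)) = \Gamma_*(\mathcal{F})_{\geq -c}$; that is,
$$0 \to L(P)^0 \to L(P)^1 \to \cdots \to L(P)^c \to \Gamma_*(\mathcal{F})_{\geq -c} \to 0$$
is a free resolution. Minimality is automatic: each $L(P)^i$ is generated in degree $-i$, so the differential $L(P)^i \to L(P)^{i+1}$ sends a generator (degree $-i$) into the degree-$(-i)$ piece of a module generated in degree $-(i+1)$, which lies inside $\mathfrak{m}\cdot L(P)^{i+1}$, leaving no room for unit entries. The main subtlety, and the only real obstruction to a cleaner statement, is the higher sheaf cohomology vanishing needed to collapse the second spectral sequence; this is precisely what forces both the hypothesis $c \leq n$ and the lower bound $j \geq -c$, and it is also why the lemma identifies $L(P)$ with the minimal free resolution of the truncation $\Gamma_*(\mathcal{F})_{\geq -c}$ rather than of the full module $\Gamma_*(\mathcal{F})$.
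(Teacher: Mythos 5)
Your proof is correct, but it takes a genuinely different route from the paper. The paper argues by induction on $c$: left-exactness of $\Gamma_*$ handles exactness at positions $\leq c-1$, Castelnuovo--Mumford regularity (the module $H^c(L(P))$ is $-c$-regular since it has a linear presentation) gives surjectivity of the natural map onto $\Gamma_*(\mathcal{F})_{\geq -c}$, and injectivity in the bottom degree is extracted from the observation that a nonzero kernel concentrated in degree $-c$ would force a Koszul summand of length $n+1$ inside the length-$c$ complex $L(P)$ --- which is where the hypothesis $c\leq n$ enters. Your degreewise hypercohomology comparison is more direct and uniform: it computes $H^p(L(P))_j$ for all $p$ and all $j\geq -c$ at once, and it makes the role of $c\leq n$ completely transparent (it is exactly what guarantees $i+j\geq -n$, hence $H^{>0}(\mathcal{O}_{\Pp^n}(i+j))=0$, so the second spectral sequence collapses to the $q=0$ row). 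What the paper's argument buys in exchange is avoiding spectral sequences entirely, at the cost of the somewhat delicate splitting/Koszul step. Two small points to clean up in your write-up: (i) you recycle the letter $n$ (the dimension of $\Pp^n$) as a generic cohomological index in ``$\mathbb{H}^n = H^n(L(P))_j$'' and ``$H^n(L(P))_j\cong H^{n-c}(\mathcal{F}(j))$, which vanishes for $n<c$''; as written this clashes with the hypothesis $c\leq n$ and should be replaced by a fresh index $p$, with the conclusion $H^p(L(P))_j\cong H^{p-c}(\mathcal{F}(j))$ for all $p$; (ii) ``comparing abutments'' should be supplemented by a remark that the resulting isomorphism $H^c(L(P))_j\cong H^0(\mathcal{F}(j))$ is the one induced by the natural augmentation $P_c\otimes\mathcal{O}(c+j)\to\mathcal{F}(j)$ --- e.g.\ because the terms of $\tilde{L}(P)(j)$ are $\Gamma$-acyclic and the quotient map $\tilde{L}(P)(j)\to\mathcal{F}(j)[-c]$ is a quasi-isomorphism, so applying $\Gamma$ and taking $H^c$ yields exactly that map. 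Neither point is a genuine gap.
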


\begin{remark}
Although Lemma \ref{lem: linear res} is stated using the BGG correspondence, one can write it without the BGG language as follows.
    Let $$ \mathcal{C}^\bullet:\,\,\,\,\,
    0\longrightarrow \Oo_{\Pp^n}(-c)^{\oplus n_{-c}}\longrightarrow \Oo_{\Pp^n}(-c+1)^{\oplus n_{-c+1}} \longrightarrow\cdots 
    \longrightarrow\Oo_{\Pp^n}^{\oplus n_0} \longrightarrow 0
    $$
    be a resolution for some coherent sheaf $\mathcal{F}$ where $c\in \{0,1,...,n\}$.
    Then $\Gamma_*(\mathcal{C})$ is the minimal free resolution for the module
    $\Gamma_*(\mathcal{F})_{\geq 0}$.
\end{remark}

\begin{proof}[Proof of Lemma \ref{lem: linear res}]
Note that once we prove that $L(P)$ is a resolution, the fact that it is minimal is automatic since $L(P)$ is a linear free complex.
    We will prove the lemma by performing induction on $c$.
    The case for $c = 0$ is clear.
    Now, let $c = 1,...,n$ and assume the lemma is already true for $c-1$.
    In particular, the lemma applies for $P_{\leq c-1}$.
    To obtain the lemma for $c$, we will need to prove:
    \begin{itemize}
        \item $H^{c-1}(L(P)) = 0$, i.e. we have exact $S$-module maps at
        \begin{align} \label{eqn:exact_c-1}
            S[c-2]\otimes P_{c-2} \xrightarrow[]{\Gamma_*d_{c-2}} S[c-1]\otimes P_{c-1} \xrightarrow[]{\Gamma_*d_{c-1}} S[c] \otimes P_c
        \end{align}
        since the exactness for $0,1,...,c-2$ is gauranteed by the induction hypothesis;
        \item the exactness of
        \begin{align} \label{eqn:exact_c}
            S[c-1]\otimes P_{c-1} \xrightarrow[]{\Gamma_*d_{c-1}} S[c] \otimes P_c \to \Gamma_*(\mathcal{F})_{\geq -c}\to 0.
        \end{align}
    \end{itemize}
    We will first show the exactness of (\ref{eqn:exact_c-1}).
    We have the following commutative diagram
\[\begin{tikzcd}
	{P_{c-1}\otimes \mathcal{O}(c-1)} && {P_c\otimes\mathcal{O}(c)} \\
	& {\text{coker}\, d_{c-2}}
	\arrow["f", from=1-1, to=2-2]
	\arrow["\iota", from=2-2, to=1-3]
	\arrow["{d_{c-1}}"', from=1-1, to=1-3]
\end{tikzcd}\]
where $f$ is the natural cokernal map and $\iota$ is an inclusion of sheaves due to the exactness of $\tilde{L}(P)$ at $c-1$.
Note that we are using $\mathcal{O}(c)$ to denote $\mathcal{O}_{\Pp^n}(c)$ and the tensor product is over $ k$.
We then have 
\begin{align*}
    \ker \Gamma_*\, d_{c-1} = \ker \Gamma_*\iota \circ\Gamma_* f = \ker\, \Gamma_*f = \im \Gamma_* d_{c-2}
\end{align*}
where the first equality is due to the functoriality of $\Gamma_*$, the second equality is because $\Gamma_* \iota$ is an injection due to the left exactness of $\Gamma_*$, and the third equality is by applying the induction hypothesis on $P_{\leq c-1}$ because $\coker d_{c-2}$ is the BGG-sheaf of $P_{\leq c-1}$.
This shows the exactness of (\ref{eqn:exact_c-1}).

Next, we show the exactness of (\ref{eqn:exact_c}).
Let $M:= H^c(L(P))$.
Because the sheafification functor $(-)^\sim: M\mapsto \tilde{M}$ is exact, we know that $\tilde{M} \cong \mathcal{F}$.
Therefore, we have a natural map of graded $S$-modules 
$$\phi: M \to \Gamma_* \mathcal{F}.$$
It is obvious that $M = M_{\geq -c}$ because it is a quotient of $P_c \otimes S[c]$ so all the generators are of degree $-c$.
The exactness of (\ref{eqn:exact_c}) is then equivalent to $\phi$ being an isomorphism in degrees $\geq -c$, with the exactness at $S[c]\otimes P_c$ being equivalent to the injectivity of $\phi$ and the exactness at $(\Gamma_*\mathcal{F})_{\geq -c}$ equivalent to the surjectivity of $\phi$.

Let's show surjection of $\phi: M \to \Gamma_* \mathcal{F}_{\geq -c}$ first.
    $M$ has a linear resolution given by $L(P)$ and is generated by degree $-c$ elements, so it is $-c$-regular.
    Therefore, it is $d$-regular for any $d\geq -c$.
    Applying Proposition \ref{prop:regular_surjective}, we immediately obtain the surjection.
    
Next, we turn to showing injection for $\phi: M_{\geq -c+1} \to (\Gamma_* \mathcal{F})_{\geq -c+1}$ before proving injection in degree $-c$ as well.
We have the following exact sequence:
$$
    0\to \coker d_{c-2} \to P_c\otimes \mathcal{O}(c) \to \mathcal{F} \to 0.
    $$
    By the left exactness of $\Gamma_*$, we have injection $\iota: \Gamma_*(P_c\otimes \mathcal{O}(c)) /(\Gamma_* \coker d_{c-2}) \to \Gamma_* \mathcal{F}$.
    The following diagram clarifies the algebra happening.
\[\begin{tikzcd}
	&&&& {\Gamma_*\mathcal{F}} \\
	{\Gamma_*(P_{c-2}\otimes\mathcal{O}(c-2))} & {\Gamma_*(P_{c-1}\otimes\mathcal{O}(c-1))} && {\Gamma_*(P_{c}\otimes\mathcal{O}(c))} & M \\
	&& {\Gamma_*\text{coker }d_{c-2}} & {}
	\arrow["\subset", from=3-3, to=2-4]
	\arrow["{\Gamma_*d_{c-1}}"', from=2-2, to=2-4]
	\arrow[from=2-2, to=3-3]
	\arrow[from=2-4, to=2-5]
	\arrow[from=2-4, to=1-5]
	\arrow["\phi", from=2-5, to=1-5]
	\arrow["{\Gamma_*d_{c-2}}", from=2-1, to=2-2]
\end{tikzcd}\]
In the diagram, both the horizontal maps and the diagonal maps are exact at $\Gamma_*(P_c\otimes \mathcal{O}(c))$.
From the diagram, we see that $\im \Gamma_* d_{c-1} \subset \Gamma_*\coker d_{c-2}$.
Because $\coker d_{c-2}$ is the BGG-sheaf of $P_{\leq c-1}$, the induction hypothesis tells us
 that $$(\Gamma_* \coker d_{c-2})_{\geq -c+1} = \coker \Gamma_* d_{c-2} = \im \Gamma_* d_{c-1}.$$

The original map $\phi$ can be factored using $\phi = \iota \circ q$ where $q$ is the natural quotient as shown in the diagram below.
\[\begin{tikzcd}
	M & {\Gamma_*(P_c\otimes\mathcal{O}(c))/\text{im }\Gamma_* d_{c-1}} \\
	{\Gamma_* \mathcal{F}} & {\Gamma_*(P_c\otimes\mathcal{O}(c))/\Gamma_* \text{coker } d_{c-2}}
	\arrow["\cong", from=1-1, to=1-2]
	\arrow["\phi"', from=1-1, to=2-1]
	\arrow["q", from=1-2, to=2-2]
	\arrow["\iota", from=2-2, to=2-1]
\end{tikzcd}\]
Recall that $\iota$ is inclusion, so $\ker \phi = \ker q$.
Because $\im \Gamma_* d_{c-1}$ and $\Gamma_* \coker d_{c-2}$ are equal in degree $d\geq -c+1$, $\ker \phi$ can only be nonzero in degree $-c$. This proves injection for degrees larger than $c$.
    
Lastly, we show injection in degree $c$.
We have a short exact sequence
$$
0\to \ker \phi \to M \xrightarrow[]{\,\phi\,} (\Gamma_* \mathcal{F})_{\geq -c} \to 0.
$$
Because $\ker \phi$ is only in degree $-c$, we can construct a module map $M \to \ker \phi$ to obtain a splitting by the composition $$M \to M_{-c} \to \ker \phi$$ where $M\to M_{-c}$ is modding out all elements whose degrees are larger than $-c$ while $M_{-c} \to \ker\phi$ is just any projection of $k$-vector spaces.
Therefore, we obtain $$M \cong \ker\phi \oplus (\Gamma_* \mathcal{F})_{\geq -c}.$$
Exactness of (\ref{eqn:exact_c-1}) says that $ L(P)$ is the minimal free resolution for $M$.
Because $\ker\phi$ is just $\dim \ker\phi$ copies of $ k$, uniqueness of minimal free resolution says that $L(P)$ must contain copies of the Koszul complex if $\ker\phi\neq 0$.
However, $L(P)$ has length $c\leq n$, so the Koszul complex which has length $n+1$ is not a summand of $L(P)$.
This shows that $\phi$ is an isomorphism for degrees greater than or equal to $-c$, so (\ref{eqn:exact_c}) is indeed exact.
\end{proof}

This next corollary provides a direct way to prove the simplicity of the BGG-sheaf $\mathcal{F}$.
\begin{corollary} \label{cor:module_simple}
If  $\,\Hom_{E\text{Mod}^{\,\text{gr}}}(P,P) =  k$, then $\Hom_{\text{QC}(\Pp^n)}(\mathcal{F},\mathcal{F}) =  k$.
\end{corollary}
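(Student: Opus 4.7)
The plan is to use Lemma \ref{lem: linear res} to transfer the endomorphism computation from the $E$-module $P$ to the truncated section module $M := \Gamma_*(\mathcal{F})_{\geq -c}$, and then compare module endomorphisms of $M$ with sheaf endomorphisms of $\mathcal{F}$.

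First, I would establish an isomorphism $\Hom_{E\text{Mod}^{\,gr}}(P,P) \cong \Hom_{S\text{Mod}^{\,gr}}(M,M)$. By Lemma \ref{lem: linear res}, $L(P)$ is the minimal linear free resolution of $M$, and by Proposition \ref{prop:equiv_cat}, the functor $L$ is fully faithful, so $\Hom_E^{gr}(P,P)$ is identified with the chain maps $L(P) \to L(P)$. On the other hand, because $L(P)$ is a free resolution, every module endomorphism of $M$ lifts to a chain endomorphism of $L(P)$, unique up to homotopy. The key observation is that there are no nonzero degree-zero chain homotopies $h^i : L(P)^i \to L(P)^{i-1}$ in a linear free complex: such a map would have to send the degree-$(-i)$ generators of $L(P)^i$ to degree-$(-i)$ elements of $L(P)^{i-1}$, but the latter is generated over the non-negatively graded ring $S$ in degree $-i+1$ and therefore contains no elements of degree $-i$. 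Hence chain maps and chain maps modulo homotopy coincide, and the composite gives the desired isomorphism.

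Next, I would show that the functor $\Gamma_*$ produces an injection $\Hom_{\text{QC}(\Pp^n)}(\mathcal{F},\mathcal{F}) \hookrightarrow \Hom_S^{gr}(M,M)$. Given $\psi : \mathcal{F}\to\mathcal{F}$, the induced module map $\Gamma_*(\psi)$ is degree zero and therefore restricts to a map $M \to M$. For injectivity, suppose this restriction vanishes: then $H^0(\psi(j)) = 0$ for all $j \geq -c$, and since $\mathcal{F}(j_0)$ is globally generated for some sufficiently large $j_0$, surjectivity of the evaluation map $H^0(\mathcal{F}(j_0)) \otimes \mathcal{O}_{\Pp^n} \twoheadrightarrow \mathcal{F}(j_0)$ forces $\psi(j_0) = 0$, hence $\psi = 0$.

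Chaining these two steps yields an injection $\Hom_{\text{QC}(\Pp^n)}(\mathcal{F},\mathcal{F}) \hookrightarrow \Hom_E^{gr}(P,P) = k$ sending the identity to a nonzero element, forcing $\Hom_{\text{QC}(\Pp^n)}(\mathcal{F},\mathcal{F}) = k$. I expect the only nontrivial step to be the triviality-of-homotopies observation — and even that reduces to a one-line degree count — so the proof should be short once Lemma \ref{lem: linear res} is in hand.
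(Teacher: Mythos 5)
Your proof is correct and follows essentially the same route as the paper: pass from sheaf endomorphisms to graded endomorphisms of $\Gamma_*(\mathcal{F})_{\geq -c}$, lift along the minimal free resolution $L(P)$ furnished by Lemma \ref{lem: linear res}, and conclude via the equivalence of Proposition \ref{prop:equiv_cat}. The paper argues by contradiction and does not need uniqueness of lifts (only that a non-scalar endomorphism lifts to a non-scalar chain map), but your degree count showing that homotopies vanish is a clean packaging of the same content.
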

\begin{proof}
    Suppose for contradiction that there exists $\phi\in \Hom(\mathcal{F},\mathcal{F})$ such that $\phi\not\in k$.
    Because the isomorphism $\mathcal{F} \to (\Gamma_*\mathcal{F})^\sim$ is natural, $\Gamma_* \phi \not\in  k$ where $\Gamma_*\phi: \Gamma_*(\mathcal{F})_{\geq -c}\to \Gamma_*(\mathcal{F})_{\geq -c}$ is the induced map of modules (Chapter 2 Proposition 5.15 of \cite{Hartshorne}).
    $\Gamma_*\phi$ then lifts to a map $\phi'\in \Hom(L(P), L(P))$ of free resolutions because free modules are projective.
    Note that $\phi' \not\in  k$ because $\Gamma_*\phi\not\in  k$.
    Due to the equivalence of category between linear free complexes of $S$-modules and graded $E$-modules (Proposition \ref{prop:equiv_cat}), $\phi'$ is a nontrivial morphism in $\Hom(P,P)$ so there is a contradiction.
\end{proof}

\section{Construction of Simple Bundles}\label{sec:construction}
In this section we give the core construction needed to establish Theorem \ref{thm:arb_hd}.
From Corollary \ref{cor:module_simple} and Proposition \ref{prop:lin_res_hd}, we see that the problem of constructing simple bundles of homological dimension $l$ relates to the problem of constructing simple faithful $E$-modules with $l$ nonzero graded pieces.
To construct simple modules, we introduce a convenient definition.

\begin{definition} \label{def:anchor}
Let $U,W$ be vector spaces.
Suppose that $L\subset U\otimes W$ satisfy the property that for all $\phi \in \Hom(U, U)$ such that $(\phi\otimes 1)(L) \subset L$, it is the case that $\phi \in  k$ (i.e. $\phi$ is just a scaling).
Then we say $L$ \textbf{anchors} $U$.
\end{definition}
This notion is relevant for constructing simple modules due to the following.
\begin{lemma}\label{lem:simple_mod}
Let $P$ be the $E$-module defined by $P = P_0 \otimes_k\left(\bigoplus_{i = 0}^{l} \bigwedge^i V\right)$ where $P_0$ is a $k$-vector space.
Let $L \subset P_0 \otimes \bigwedge^l V$ be such that $L$ anchors $P_0$.
Then $\Hom_{E\text{Mod}^{\,\text{gr}}}(P / L, P/L) =  k$.
\end{lemma}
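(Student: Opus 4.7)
The plan is to reduce an arbitrary graded $E$-module endomorphism $\psi : P/L \to P/L$ to a linear endomorphism $\psi_0 : P_0 \to P_0$ and then apply the anchoring hypothesis in its tautological form. The key starting observation is that $P$ is constructed so that its degree-zero piece generates it: $P_i = \bigwedge^i V \cdot P_0$ for each $0 \le i \le l$. Since $L$ is concentrated in degree $l$, the identification $(P/L)_0 = P_0$ is clean and $P/L$ is still generated in degree zero, so $\psi$ is uniquely determined by its restriction $\psi_0 := \psi|_{P_0} : P_0 \to P_0$.

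Next I would construct a lift $\tilde\psi : P \to P$ by the formula $\tilde\psi = \psi_0 \otimes \mathrm{id}$, which is manifestly $E$-linear because $E$ acts only on the exterior-algebra tensor factor. A brief diagram chase then shows that $\pi \circ \tilde\psi = \psi \circ \pi$, where $\pi : P \to P/L$ is the quotient: both are graded $E$-module maps $P \to P/L$, both equal $\psi_0$ in degree zero after identifying $(P/L)_0 = P_0$, and $P$ is generated in degree zero, so they agree everywhere. Consequently $\tilde\psi(L) \subseteq \ker \pi = L$.

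Finally I would feed this into the anchoring hypothesis. Because $L \subseteq P_0 \otimes \bigwedge^l V = P_l$ and $\tilde\psi$ restricts to $\psi_0 \otimes \mathrm{id}$ on the top graded piece $P_l$, the containment $\tilde\psi(L) \subseteq L$ reads precisely as $(\psi_0 \otimes 1)(L) \subseteq L$. The anchoring of $P_0$ by $L$ then forces $\psi_0 \in k$, so $\psi$ is multiplication by a scalar. I do not expect a serious obstacle here: the lemma is essentially the translation of the anchoring condition into module-theoretic language, and the only point requiring genuine care is verifying that the given $\psi$ is captured by the naive lift $\tilde\psi = \psi_0 \otimes \mathrm{id}$, which is just the standard principle that morphisms out of a module generated in its lowest degree are determined by their action on those generators.
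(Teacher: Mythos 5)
Your proposal is correct and follows essentially the same route as the paper's proof: restrict $\psi$ to the degree-zero generators, lift to $\tilde\psi = \psi_0\otimes\mathrm{id}$ on $P$, use the commutation with the quotient map to deduce $(\psi_0\otimes 1)(L)\subseteq L$, and invoke the anchoring hypothesis. No substantive differences.
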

\begin{proof}
    Let $\phi: P/L \to P/L$ be a morphism of graded $E$-modules.
    Note that $\phi$ is completely determined by $\phi|_{P_0}$ because $P/L$ is generated by $P_0$ as an $E$-module.
    Let $\tilde{\phi}: P\to P$ be the morphism determined by $\phi|_{P_0}$.
    Because $\phi$ preserves $E$-scalar multiplication, we have the following commutative square
\[\begin{tikzcd}
	P & P \\
	{P/L} & {P/L}
	\arrow["q", from=1-1, to=2-1]
	\arrow["q", from=1-2, to=2-2]
	\arrow["{\tilde{\phi}}", from=1-1, to=1-2]
	\arrow["\phi", from=2-1, to=2-2]
\end{tikzcd}\]
    where $q$ is the natural quotient map.
    The commutivity implies that $$(\tilde{\phi}|_{P_0\otimes \wedge^l V})(L) = (\phi|_{P_0} \otimes 1_{\wedge^l V})(L) \subset L.$$
    Because $L$ anchors $P_0$, $\phi|_{P_0} \in  k$ so $\phi \in  k$.
\end{proof}
The above lemma tells us we can mod out by certain linear subspaces to construct simple $E$-modules.
For our purpose of producing vector bundles, we need the resulting $E$-module to be faithful.
Therefore, the following lemma is useful.
\begin{lemma} \label{lem:faithful_quotient}
Let $P$ be a faithful $E$-module and $l = \max_i{P_i\neq 0}$.
Let $$\chi_i:= \sum_{j=0}^i (-1)^{j-i}\dim P_j.$$
Let $k$ be such that $0 \leq k \leq \chi_l -n$ (we assume that $\chi_l\geq n$ here).
Then a general $L \in \Gr(k, P_l)$ has the property that $P/L$ is a faithful module.
\end{lemma}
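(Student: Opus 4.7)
The plan is to reduce faithfulness of $P/L$ to a condition at the top graded piece, then do a dimension count on an incidence variety inside $\mathbb{P}(V) \times \Gr(k, P_l)$.

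First I would observe that since $L \subset P_l$ lives in the top degree and $VP_l = 0$, the subspace $L$ is automatically an $E$-submodule, so $P/L$ makes sense, and its graded pieces agree with $P$ except at degree $l$. For $P/L$ to be faithful we need exactness of multiplication by $v$ at every position $i < l$, for every $v \in V$. At positions $i < l-1$ the sequence $P_{i-1} \to P_i \to P_{i+1}$ is unaffected by $L$, so it is already exact by faithfulness of $P$. The only new condition is exactness at $P_{l-1}$, namely $\ker(\cdot v \colon P_{l-1} \to P_l/L) = \ker(\cdot v \colon P_{l-1} \to P_l)$, which is equivalent to
\[ vP_{l-1} \cap L = 0 \quad \text{for every } v \in V \setminus \{0\}. \]

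Next I would use faithfulness of $P$ to compute $\dim vP_{l-1}$ for each nonzero $v$. The exact sequence $0 \to vP_{i-1} \to P_i \to vP_i \to 0$ holds for $i < l$ by faithfulness (the kernel of $\cdot v$ at $P_i$ equals the image of $\cdot v$ from $P_{i-1}$), and induction on $i$ then gives $\dim vP_i = \sum_{j=0}^i (-1)^{i-j}\dim P_j = \chi_i$. In particular $\dim vP_{l-1} = \chi_{l-1}$, which is independent of the choice of $v \ne 0$. Note also $\dim P_l = \chi_l + \chi_{l-1}$.

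Finally I would run the standard Schubert-type dimension count. For each fixed nonzero $v$, the closed subvariety $Z_v := \{L \in \Gr(k,P_l) : L \cap vP_{l-1} \neq 0\}$ has codimension $\dim P_l - \chi_{l-1} - k + 1 = \chi_l - k + 1$ in $\Gr(k,P_l)$ (when positive). Consider the incidence variety
\[ \mathcal{Z} := \{(v,L) \in \mathbb{P}(V) \times \Gr(k,P_l) : L \cap vP_{l-1} \neq 0\}, \]
whose fiber over each $v \in \mathbb{P}(V)$ is $Z_v$. Since $\dim vP_{l-1} = \chi_{l-1}$ is constant, the fibers are equidimensional, so $\dim \mathcal{Z} = n + \dim\Gr(k,P_l) - (\chi_l - k + 1)$. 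The projection to $\Gr(k,P_l)$ fails to be dominant precisely when $n < \chi_l - k + 1$, i.e.\ $k \leq \chi_l - n$, which is exactly our hypothesis. Hence the bad locus $\bigcup_{v} Z_v$ is contained in a proper closed subvariety of the irreducible $\Gr(k,P_l)$, and a general $L$ satisfies $vP_{l-1}\cap L = 0$ for all $v$, giving faithfulness of $P/L$.

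The only delicate point is confirming that the fiber dimension of $\mathcal{Z}$ over $\mathbb{P}(V)$ is genuinely constant, which is where the uniform formula $\dim vP_{l-1} = \chi_{l-1}$ provided by faithfulness of $P$ is essential; without it, one could in principle have fibers $Z_v$ of different codimensions and the dimension inequality could fail for degenerate $v$. Everything else is routine Grassmannian dimension counting.
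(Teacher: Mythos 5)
Your proof is correct and follows essentially the same route as the paper: both reduce faithfulness of $P/L$ to the single condition that $L$ avoid $\bigcup_{v\neq 0} vP_{l-1}$, both use faithfulness of $P$ to get the uniform count $\dim vP_{l-1}=\chi_{l-1}$, and both conclude by the same dimension count ($n$ parameters for $v$ plus $\chi_{l-1}$ for $vP_{l-1}$ against $\dim P_l=\chi_l+\chi_{l-1}$). The only difference is packaging: the paper bounds the dimension of the affine cone $\overline{\im b}\leq \chi_{l-1}+n$ inside $P_l$ and quotes the fact that a general linear subspace of complementary dimension misses it, whereas you run the equivalent count as an incidence correspondence over $\Pp(V)$ with Schubert loci $Z_v$ — and you correctly flag the constancy of $\dim vP_{l-1}$ as the point that makes the fiber-dimension argument go through.
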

\begin{proof}
We have the bilinear map: $b: P_{l-1} \times V \to P_l$ defined by the $E$-action.
For any $L \subset P_l$, $L \cap \im b = 0$ sufficiently shows that $P/L$ is faithful: because $P$ is already faithful, we just need to check the exactness of
$$P_{l-2} \xrightarrow[]{\,\cdot v\,}P_{l-1}  \xrightarrow[]{\,\cdot v\,} P_{l}/L$$
at $P_{l-1}$, which is gauranteed if $L \cap \im b = 0$.

As a map of algebraic varieties, a general fiber $F$ of $b$ has $\dim F \geq \chi_{l-2} + 1$.
This is because the map $\cdot v: P_{l-1} \to P_l$ has a kernel of dimension $\dim \ker(\cdot v) = \chi_{l-2}$ and there is the redundancy of relative scaling between $P_{l-1}$ and $V$.
Thus,
\begin{align*}
    \dim \overline{\im b} \leq& \dim P_l + n+ 1 - \chi_{l-2} - 1\\
    =&\chi_{l-1} + n
\end{align*}
where $\overline{\im b}$ denotes Zariski closure.
Note that $\im b \subset P_l$ being invariant under scaling implies $\overline{\im b}$ is too, so $\overline{\im b}$ can be viewed as the affine cone of some projective variety.
Then a general linear subspace of dimension less than $\dim P_l - \dim \im b \leq \chi_l - n$ will be disjoint from $\im b$ \cite[p.224]{harris1992}.
\end{proof}

The key linear algebra lemma that allows us to find anchoring linear subspaces is the following:
\begin{lemma}\label{lem:general_anchor}
    Let $n = \dim U>1$, $m = \dim W \geq 4$ where $U,W$ are finite dimensional $k$-vector spaces. 
    Let $d$ be an integer in the range $d\in (\frac{2n}{m}, mn-\frac{2n}{m})$.
    Then a general $d$-dimensional linear subspace $L \in \Gr(d, U\otimes W)$ anchors $U$.
\end{lemma}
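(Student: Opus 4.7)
My plan is to show that the non-anchoring locus is a proper closed subvariety of $\Gr(d, U\otimes W)$, so that the anchoring locus is a nonempty open subset by irreducibility of the Grassmannian. To this end, consider the incidence variety
$$
I := \bigl\{([\phi], L) \in \mathbb{P}(\End(U)/k\cdot 1_U) \times \Gr(d, U\otimes W) : (\phi \otimes 1)(L)\subset L\bigr\},
$$
observing that scaling $\phi$ or adding a scalar multiple of the identity does not change which subspaces it preserves, so $\End(U)/k\cdot 1_U$ is the natural parameter space and $\phi \in k$ corresponds to the excluded zero class. The projection $\pi_2 : I \to \Gr(d, U\otimes W)$ is proper (the first factor is projective), so its image—the non-anchoring locus—is closed. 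It therefore suffices to show $\dim I < \dim \Gr(d, U\otimes W) = d(mn-d)$.

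To bound $\dim I$, I would project via $\pi_1$ and stratify by the $GL(U)$-conjugacy type of $\phi$. For each non-scalar class, the fiber is the variety $\mathcal{F}_d(\phi\otimes 1)$ of $(\phi\otimes 1)$-invariant $d$-subspaces of $U\otimes W$. The key structural observation is that each Jordan block of $\phi$ of size $s$ produces $m$ Jordan blocks of size $s$ for $A := \phi\otimes 1$: in the diagonalizable case $\mathcal{F}_d(A)$ decomposes along the eigenspace decomposition of $A$ as a disjoint union of products $\Gr(d_1, am) \times \Gr(d_2, (n-a)m)$ over partitions $d = d_1+d_2$, while in the non-semisimple case one uses the filtration $L \supset L\cap \ker A \supset L \cap \ker A^2 \supset \cdots$ together with the induced injections between successive subquotients to stratify and bound $\mathcal{F}_d(A)$. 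The duality $L \leftrightarrow L^{\perp}$ preserves anchoring and swaps dimensions $d \leftrightarrow mn - d$, so I may assume $d \leq mn/2$.

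The main obstacle is carrying out the case analysis and verifying (stratum dim) $+$ (fiber dim) $< d(mn-d)$ uniformly across conjugacy types. I expect the tightest case to be $\phi$ diagonalizable with eigenvalue multiplicities $(1, n-1)$: an orbit–stabilizer count gives stratum dimension $n^2 - 1 - (n-1)^2 = 2n - 2$ in $\mathbb{P}(\End(U)/k\cdot 1_U)$, while the fiber attains its maximum dimension $d((n-1)m - d)$ at the component with $(d_1, d_2) = (0, d)$. The required inequality then reduces to $dm > 2n - 2$, which is implied with room to spare by the hypothesis $d > 2n/m$. All other strata—other diagonalizable multiplicity types, and types with nontrivial Jordan blocks—should yield strictly looser conditions (for instance, the regular nilpotent stratum has a large conjugacy-class dimension but a much smaller invariant Grassmannian, controlled by the filtration argument above), so the hypothesis $d \in (2n/m,\; mn - 2n/m)$ suffices uniformly.
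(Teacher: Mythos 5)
Your overall frame (incidence variety, properness of the projection to the Grassmannian, and the duality $L\leftrightarrow L^{\perp}$ reducing to $d\leq mn/2$) is sound and partially parallels the paper, which also proves openness of the anchoring locus via an incidence correspondence and uses the same duality. The genuine divergence is in how nonemptiness is obtained: the paper exhibits one anchoring subspace explicitly, by rewriting invariance as the matrix equations $AV_a=V_aC$ and constructing the $V_a$ from identity blocks together with two matrices that generate all of $\End(k^d)$ (Burnside's theorem); you instead propose to prove that the non-anchoring locus has dimension strictly less than $d(mn-d)$ by stratifying $\Pp(\End(U)/k\cdot 1_U)$ by conjugacy type and bounding the invariant-subspace varieties of $\phi\otimes 1$. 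That is a legitimate alternative route in principle, but as written it is a program, not a proof: the decisive uniform inequality (stratum dimension) $+$ (fiber dimension) $< d(mn-d)$ over \emph{all} conjugacy types, including the non-semisimple ones, is exactly the step you defer ("the main obstacle"), and the invariant-subspace varieties of nilpotent Jordan types are not analyzed at all (they are generally reducible and their dimensions are not given by a single product of Grassmannians, so the filtration sketch does not yet yield a usable bound).

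Moreover, the one stratum you do compute contains an error. For $\phi$ diagonalizable with multiplicities $(1,n-1)$, the fiber is a union of products $\Gr(d_1,m)\times\Gr(d_2,(n-1)m)$ over $d_1+d_2=d$, and the maximum of $d_1(m-d_1)+d_2((n-1)m-d_2)$ is \emph{not} attained at $(d_1,d_2)=(0,d)$ once $d>m(n-2)/2$: for $n=3$, $m=4$, $d=6$ (which lies in your allowed range) the value at $(2,4)$ is $20$, versus $12$ at $(0,6)$. Hence the claimed reduction of this stratum to "$dm>2n-2$" is unjustified as stated; the quantity that must be controlled is the minimum over $(d_1,d_2)$ of $d(mn-d)-d_1(m-d_1)-d_2((n-1)m-d_2)=d_1m(n-1)+d_2m-2d_1d_2$, which does still exceed $2n-2$ in the stated range, but only after a separate convexity argument that you have not given. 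Until the full case analysis (all eigenvalue multiplicities, more than two eigenvalues, and nontrivial Jordan blocks) is carried out with correct fiber maxima, the proposal does not establish the lemma; the paper's explicit construction sidesteps precisely this difficulty.
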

Note that this bound is not sharp but is sufficient for our purpose of constructing vector bundles.
Before we prove Lemma \ref{lem:general_anchor}, we present the construction for simple bundles of all homological dimensions.

\begin{proof}[Proof of Theorem \ref{thm:arb_hd}]
Fix a $l \in\{ 1,2,...,n-1\}$ and $r \geq n$.
Let $p>0$ be an integer so that $$r< p\left({n\choose l} - \frac{2}{{n+1\choose l}}\right).$$
Let $P_0:=k^p$ and $$P:=P_0 \otimes \left(\bigoplus_{i=0}^l \bigwedge^i V\right).$$
Because $\bigoplus_{i = 0}^{l} \bigwedge^i V$ is faithful, $P$ is also faithful.
Simple calculation shows that $\chi_l = p{n\choose l}$.
By applying Lemma \ref{lem:general_anchor} and Lemma \ref{lem:faithful_quotient}, there exists a linear subspace $L \subset P_l = P_0 \otimes \bigwedge^l V$ with the following properties:
\begin{enumerate}
    \item $L$ anchors $P_0$,
    \item $P/L$ is a faithful module,
    \item $\dim L = p{n \choose l} - r$
\end{enumerate}
because $r\in \left[n, p\left({n\choose l} - \frac{2}{{n+1\choose l}}\right) \right)$ implies that 
$$p{n \choose l} - r \in \left(\frac{2p}{{{n+1}\choose l}},\, p{n+1\choose l} - \frac{2p}{{n+1\choose l}}\right)$$ (using the identity ${n+1\choose l} - {n \choose l} = {n\choose l-1}$ allows us to see this).
Let $\mathcal{F}$ be the BGG sheaf of $P/L$.
Then we observe the following about $\mathcal{F}$:
\begin{itemize}
    \item $\mathcal{F}$ is a vector bundle because $P/L$ is faithful;
    \item $\mathcal{F}$ has rank $r$ because $\dim L = p{n \choose l} - r$;
    \item $\mathcal{F}$ is simple because Lemma \ref{lem:simple_mod} tells us $P/L$ is simple and because of Proposition \ref{cor:module_simple};
    \item $\mathcal{F}$ has homological dimension $l$ because of Proposition \ref{prop:lin_res_hd}.
\end{itemize}
Therefore, we have constructed a vector bundle $\mathcal{F}$ of rank $r$ and homological dimension $l$. 
\end{proof}

\begin{example}
    We construct a simple bundle $\mathcal{F}$ of rank $5$ and homological dimension $2$ on $\Pp^3$ explicitly to illustrate the idea.
    Following the procedure described above, we consider the module
    $$
    P = k^2 \otimes \left(\bigoplus_{i = 0}^{2} \bigwedge^i V\right)
    $$
    and the quotient $P/L$ where $L$ is a general $1$-dimensional subspace of $k^2 \otimes \left(\bigwedge^2 V\right)$.
    Lemma \ref{lem:faithful_quotient} and Lemma \ref{lem:general_anchor} then tells us that $P/L$ is faithful and simple.
    Thus, $\tilde{L}(P/L)$ provides a resolution for its BGG-sheaf $\mathcal{F}$, which is of rank $5$ and homological dimension $2$:
    $$
    0\longrightarrow k^2 \otimes \mathcal{O}
    \longrightarrow (k^2\otimes V) \otimes \mathcal{O}(1)
    \longrightarrow
    \left(\left(k^2\otimes \bigwedge^2 V\right) / L\right) \otimes\mathcal{O}(2)
    \longrightarrow \mathcal{F}\longrightarrow 0.
    $$
    
\end{example}

The rest of this section is devoted to proving Lemma \ref{lem:general_anchor}.
\begin{proposition}\label{prop:anchor_open}
    Let $U,W$ be finite dimensional vector spaces with dimension $n,m$ respectively.
    The subset of $d$ dimensional subspaces that anchor $U$ is Zariski open in $\Gr(d, U\otimes W)$.
\end{proposition}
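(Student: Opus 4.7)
The plan is to express the anchoring condition as a maximal-rank condition on a morphism of vector bundles over the Grassmannian, and then conclude by lower semi-continuity of rank.

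First, I would reformulate \emph{anchoring} at the level of a single $L \in \Gr(d, U\otimes W)$. Consider the $k$-linear map
$$\mu_L : \End(U) \to \Hom\bigl(L,\,(U\otimes W)/L\bigr)$$
sending $\phi$ to the composition $L \hookrightarrow U\otimes W \xrightarrow{\phi\otimes 1_W} U\otimes W \twoheadrightarrow (U\otimes W)/L$. By construction, $\ker \mu_L = \{\phi \in \End(U) : (\phi\otimes 1_W)(L) \subset L\}$, so $L$ anchors $U$ exactly when $\ker \mu_L = k\cdot \mathrm{id}_U$. Since $\mathrm{id}_U$ always lies in $\ker \mu_L$, we have $\mathrm{rank}\,\mu_L \leq n^2 - 1$, and anchoring is equivalent to this upper bound being attained.

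Next, I would globalize over $G := \Gr(d, U\otimes W)$. Writing $\mathcal{S}$ and $\mathcal{Q}$ for the tautological sub- and quotient bundles on $G$, the fiberwise constructions of the $\mu_L$ assemble into a morphism of algebraic vector bundles
$$\mu : \End(U) \otimes \mathcal{O}_G \longrightarrow \mathcal{S}^{\vee} \otimes \mathcal{Q}$$
whose fiber at $[L]$ is $\mu_L$. That this is a genuine bundle map is immediate in local trivializations induced by the standard affine charts of the Grassmannian: both source and target trivialize, and the map is given by polynomial expressions in the Pl\"ucker-type coordinates.

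Finally, for any morphism of vector bundles the locus where the fiberwise rank is at least $r$ is Zariski open, being cut out by the non-vanishing of suitable $r\times r$ minors in a local matrix representation. Taking $r = n^2 - 1$ and combining with the upper bound $\mathrm{rank}\,\mu_L \leq n^2 - 1$ established above, the anchoring locus is exactly
$$\bigl\{[L] \in G : \mathrm{rank}\,\mu_L \geq n^2 - 1\bigr\},$$
which is therefore Zariski open. There is no real obstacle here; the only point requiring any care is checking that the fiberwise construction globalizes to an algebraic bundle map, and this is essentially routine once the tautological bundles are in hand.
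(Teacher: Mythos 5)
Your proof is correct, and it takes a route that is genuinely different in packaging from the paper's, though the two are closely related underneath. The paper sets up an incidence correspondence $Z \subset \Pp(\End(U))\times \Gr(d,U\otimes W)$ of pairs $(\phi,L)$ with $(\phi\otimes 1)(L)\subset L$, notes $\pi_2\colon Z\to \Gr(d,U\otimes W)$ is projective, and invokes upper semi-continuity of fiber dimension (EGA); anchoring is the condition that the fiber $Z_L$ is zero-dimensional. Your fiber $\Pp(\ker\mu_L)$ is exactly the paper's $Z_L$, so the two arguments are dual descriptions of the same locus: upper semi-continuity of $\dim Z_L$ is the same as lower semi-continuity of $\operatorname{rank}\mu_L$. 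What your version buys is self-containedness: you replace the Chevalley-type semi-continuity theorem for a projective morphism with the elementary openness of the maximal-rank locus of a matrix of regular functions, at the cost of having to observe that the fiberwise maps $\mu_L$ assemble into an algebraic bundle map $\End(U)\otimes\mathcal{O}_G\to \mathcal{H}om(\mathcal{S},\mathcal{Q})$ --- which, as you say, is routine in the standard charts (and is in fact the restriction to $\End(U)\subset\End(U\otimes W)$ of the derivative of the $GL(U\otimes W)$-action on the Grassmannian). The one point worth stating explicitly, which you do, is that $\mathrm{id}_U\in\ker\mu_L$ forces $\operatorname{rank}\mu_L\le n^2-1$ everywhere, so the open locus $\{\operatorname{rank}\mu_L\ge n^2-1\}$ coincides with the anchoring locus rather than merely containing it.
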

\begin{proof}
    We can see this by setting up an incidence correspondence and applying upper semi-continuity.
    Consider $$X\subset \Pp(\End(U\otimes W)) \times \Gr(d, U\otimes W)$$ such that $(\phi, L) \in X$ if and only if $\phi(L)\subset L$.
    Let $C\subset \Gr(d, U\otimes W)$ be the usual chart consisted of $d \times nm$ matrices whose first $d$ by $d$ minor is the identity.
    Then over $C$, $X$ is given by the equation $$\phi(L_i) \wedge L_1\wedge L_2\wedge ...\wedge L_d = 0$$
    for $i = 1,...,d$ where $L_i$ is the vector represented by the $i$th row of the $d \times nm$ matrix corresponding to subspace $L$ in chart $C$.
    Thus, $X$ is closed in $\Pp(\End(U\otimes W)) \times C$.
    Because these charts cover all of the Grassmannian, $X$ is a closed subset of $\Pp(\End(U\otimes W)) \times \Gr(d, U\otimes W)$.
    Consider $Z = \pi_1^{-1}(\Pp(\End(U)))\subset X$ where we are viewing $\Pp(\End(U)) \subset \Pp(\End(U\otimes W))$ as a linear subvariety using the identification $\phi \mapsto \phi \otimes 1$.
    Since $\pi_2: Z \to \Gr(d, U\otimes W)$ is projective, we can apply the upper semi-continuity of fiber dimensions (Corollary 13.1.5 of \cite{EGA}).
    $L$ anchoring $U$ is equivalent to the fiber $Z_L$ being zero dimensional so upper semi-continuity gives us the proposition.
\end{proof}
Note that Proposition \ref{prop:anchor_open} does not say anything about the subset being nonempty, which is needed to give us Lemma \ref{lem:general_anchor}.
\begin{proposition} \label{prop:anchor_duality}
    Let $U,W$ be finite dimensional vector spaces with dimension $n,m$ respectively.
    A general $d$ dimensional subspace of $U\otimes W$ anchors $U$ if and only if a general $nm-d$ dimensional subspace anchors $U$.
\end{proposition}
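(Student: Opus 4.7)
The plan is to use Grassmannian duality, namely the natural algebraic isomorphism
$$\Phi: \Gr(d, U\otimes W) \xrightarrow{\sim} \Gr(nm-d, (U\otimes W)^*), \qquad L \mapsto L^{\perp},$$
composed with the canonical identification $(U\otimes W)^* \cong U^* \otimes W^*$. Since the anchoring loci are Zariski open by Proposition \ref{prop:anchor_open}, and since ``general'' here means ``in a nonempty open set,'' it suffices to show that $\Phi$ carries the locus where $L$ anchors $U$ bijectively onto the locus where $L^\perp$ anchors $U^*$. Then, by choosing any linear isomorphisms $U \cong U^*$ and $W \cong W^*$ (which exist since they have the same dimensions), we get a third isomorphism $\Gr(nm-d, U^*\otimes W^*) \cong \Gr(nm-d, U\otimes W)$ under which anchoring $U^*$ transports to anchoring $U$; combining these three isomorphisms gives the proposition.

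The key computation is showing that $L$ anchors $U$ if and only if $L^\perp$ anchors $U^*$. For any $\phi \in \End(U)$, I would verify directly from the definition of the annihilator that
$$(\phi \otimes 1_W)(L) \subset L \iff (\phi^* \otimes 1_{W^*})(L^\perp) \subset L^\perp,$$
where $\phi^* \in \End(U^*)$ denotes the transpose. One direction is immediate: if $f \in L^\perp$ and $\ell \in L$, then $((\phi^* \otimes 1)(f))(\ell) = f((\phi\otimes 1)(\ell)) = 0$. The converse follows by applying the same argument with $\phi$ replaced by $\phi^{**} = \phi$ and $L$ by $(L^\perp)^\perp = L$. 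Since the assignment $\phi \mapsto \phi^*$ is a linear isomorphism $\End(U) \to \End(U^*)$ that sends scalars to scalars, the non-scalar stabilizers of $L$ and $L^\perp$ are in bijection; in particular one is trivial precisely when the other is.

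The main obstacle, though it is a minor one, will be keeping straight the two distinct uses of duality (the Grassmannian duality $L \mapsto L^\perp$ versus the $\End$ duality $\phi \mapsto \phi^*$) and checking they interact in the compatible way sketched above. Once that identification is in hand, the rest is formal: composing $\Phi$ with the isomorphism induced by fixed choices of $U \cong U^*$ and $W \cong W^*$ yields an algebraic isomorphism $\Gr(d, U\otimes W) \cong \Gr(nm-d, U\otimes W)$ that matches the (open) anchoring $U$ locus on one side with the (open) anchoring $U$ locus on the other, so one is nonempty if and only if the other is.
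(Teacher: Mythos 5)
Your proposal is correct and takes essentially the same route as the paper: the paper also passes to $U^*\otimes W^*$ via the annihilator $L\mapsto L^\perp$ and the transpose $\phi\mapsto\phi^*$, phrased as an isomorphism between the two incidence correspondences $Z$ and $Z^*$ over the Grassmannians. Your write-up in fact supplies the verification (that the invariance conditions correspond under duality, and that anchoring transports back to $U\otimes W$ via choices of isomorphisms $U\cong U^*$, $W\cong W^*$) which the paper leaves as ``clear.''
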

\begin{proof}
    We can define $$X^*\subset \Pp(\End(U^*\otimes W^*)) \times \Gr(nm-d, U^*\otimes W^*)$$ such that $(\psi, N) \in X^*$ if and only if $\psi(N)\subset N$.
    where $U^*$ is the dual space of $U$.
    We define $Z^*$ similarly using $\Pp(\End(U^*)) \subset \Pp(\End(U^*\otimes W^*))$.
    Let $f: Z \to Z^*$ be the map given by $(\phi,L) \mapsto (\phi^*, N)$ whre $\phi^*$ is the transpose of $\phi$ and $N$ is the space of linear forms that vanish on $L$.
    It is clear that $f$ is an isomorphism and the proposition follows. 
\end{proof}

We have a surjective rational map $$(U\otimes W)^{\oplus d} \cong U\otimes W \otimes k^d \cong k^{nmd} \dashrightarrow \Gr(d, U\otimes W)$$ defined by $(v_1,...,v_d) \mapsto v_1\wedge...\wedge v_d$.
After fixing bases for $U$ and $W$, we can write these data using indices conveniently.
Let $\mu,\nu$ be the indices for $k^d$, $i,j$ for $U$ and $a,b$ for $W$.
We can express $v_\mu$ by $$v_\mu = v_{i\mu a} u^i\otimes w^a$$ where $u^i, w^a$ are the basis vectors for $U,W$ respectively and Einstein summation is used.
Let $A \in \End(U)$.
Let $A^i_j$ be defined by $A(u^i) = A^i_j u^j$.
Given $A \in \End(U)$, the condition that $A$ fixes the subspace $$A(\langle v_1,...,v_d\rangle) \subset \langle v_1,...,v_d\rangle$$ is equivalent to the existence of matrix $C^\nu_{\mu}$ such that 
$A(v_\mu) = v_\nu C^\nu_\mu$.
Using basis and indices, this is $A^i_j v_{i\mu a}u^j\otimes w^a = v_{i\nu a}C^\nu_\mu u^i \otimes w^a$, which can be written as

\begin{equation} \label{eqn:tensor}
    A^j_i v_{j\mu a} = v_{i\nu a}C^\nu_\mu
\end{equation}
where it is understood that the equation applies for any $i = 1,...,n$, $\mu = 1,...,d$ and $a = 1,...,m$.
To summarize, we have the following:
\begin{proposition} \label{prop:tensor_solution}
Let $U,W$ be finite dimensional vector spaces with dimension $n,m$ respectively.
    If there exists an element $v_{j\mu a} \in U\otimes k^d \otimes W$ such that any pair of matrices $(A^j_i,C^\nu_\mu)$ that satisfies (\ref{eqn:tensor}) must be proportional to $(I_{n\times n}, I_{d\times d})$, then there exists an element $L\in \Gr(d, U\otimes W)$ that anchors $U$.
\end{proposition}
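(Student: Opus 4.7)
The plan is to construct $L$ explicitly from the hypothesized tensor and then read off both the dimension count and the anchoring property directly from equation (\ref{eqn:tensor}). Using the bases $\{u^j\}$ of $U$ and $\{w^a\}$ of $W$ fixed in the discussion preceding the proposition, for each $\mu = 1,\ldots,d$ I set $v_\mu := v_{j\mu a}\, u^j\otimes w^a \in U\otimes W$ and take $L := \langle v_1,\ldots,v_d\rangle$. Two claims must then be established: that $\dim L = d$, so that $L$ is genuinely a point of $\Gr(d,U\otimes W)$; and that this $L$ satisfies Definition \ref{def:anchor}.

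For the linear-independence step, I plan to apply the hypothesis in the special case $A = I_{n\times n}$. Let $V_0 : k^d \to U\otimes W$ be the linear map $e_\mu \mapsto v_\mu$. If $\ker V_0$ were nonzero, I could pick any nonzero $D : k^d \to \ker V_0 \subset k^d$ and form $C := I_{d\times d} + D$; then $V_0\circ C = V_0$, which translates in coordinates to $v_{i\mu a} = v_{i\nu a} C^\nu_\mu$, i.e., to equation (\ref{eqn:tensor}) with $A = I_{n\times n}$. Since $C \neq I_{d\times d}$, the pair $(I,C)$ would satisfy (\ref{eqn:tensor}) without being proportional to $(I_{n\times n}, I_{d\times d})$, contradicting the hypothesis. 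Hence $V_0$ is injective and $\dim L = d$.

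For the anchoring step, suppose $A \in \End(U)$ satisfies $(A\otimes 1)(L) \subset L$. Since $v_1,\ldots,v_d$ is now a basis of $L$, there exists a unique matrix $C^\nu_\mu$ with $(A\otimes 1)(v_\mu) = v_\nu C^\nu_\mu$. Expanding both sides in the basis $\{u^i\otimes w^a\}$ and using $A(u^j) = A^j_i u^i$ reproduces equation (\ref{eqn:tensor}) verbatim. The hypothesis then forces $(A,C) = (\lambda I_{n\times n}, \lambda I_{d\times d})$ for some scalar $\lambda \in k$, so in particular $A = \lambda \cdot 1_U$, which is exactly what Definition \ref{def:anchor} requires.

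The only genuinely non-routine step is the linear-independence verification, since the proposition statement does not build it in. Everything else is a direct dictionary between the coordinate-free anchoring condition and the indexed equation (\ref{eqn:tensor}), and the key observation is that specializing the rigidity hypothesis to $A = I$ already rules out any dependence among the $v_\mu$.
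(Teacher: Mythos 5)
Your proof is correct and follows essentially the same route as the paper: construct $L$ as the span of the $v_\mu$, rule out linear dependence by noting that a nonzero kernel of $\mu \mapsto v_\mu$ would produce a solution $(I_{n\times n}, I_{d\times d}+K)$ of (\ref{eqn:tensor}) not proportional to $(I_{n\times n}, I_{d\times d})$, and then observe that (\ref{eqn:tensor}) is exactly the coordinate form of the invariance condition $(A\otimes 1)(L)\subset L$. Your write-up is in fact slightly more explicit than the paper's about why invariance of $L$ yields a matrix $C$ (namely, that $\{v_\mu\}$ is by then known to be a basis of $L$), but the substance is identical.
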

\begin{proof}
    Let $v_{j\mu a}$ satisfy the condition in the proposition.
    If the $v_1,...,v_d$ constructed from $v_{j\mu a}$ are linearly independent, then $L$ generated by them is an anchor because, as explained above, Equation \ref{eqn:tensor} is just a rewriting of the condition for being an invariant subspace.
    However, if $v_1,...,v_d$ are linearly dependent, we can find nonzero $K^\nu_\mu$ such that $v_{i\nu a}K^\nu_\mu = 0$, so $(I_{n\times n}, (I_{d\times d} + K))$ is an alternative solution for Equation \ref{eqn:tensor}. 
    Therefore, $v_1,...,v_d$ must be linearly independent.
    Thus, $L = \langle v_1,...,v_d\rangle$ will always anchor $U$.
\end{proof}

To construct the tensor $v_{i\mu a}$ that satisfy Proposition \ref{prop:tensor_solution}, we will use the following fact from linear algebra.
\begin{proposition} \label{prop:burnside}
    There exist $n$ by $n$ matrices $B_1,B_2\in M_{n\times n}$ such that any $C\in M_{n\times n}$ that commutes with both $B_1,B_2$ must be proportional to the identity.
\end{proposition}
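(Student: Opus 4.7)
The plan is to exhibit two explicit matrices $B_1, B_2$ whose joint centralizer in $M_{n\times n}(k)$ is exactly $k\cdot I_{n\times n}$, by separating the problem into two easy commutativity computations. I would take $B_1 = \operatorname{diag}(\lambda_1, \ldots, \lambda_n)$ for any $n$ pairwise distinct scalars $\lambda_1, \ldots, \lambda_n \in k$; such scalars exist because any algebraically closed field is infinite, so this choice is available in every characteristic. For $B_2$ I would take the permutation matrix of the $n$-cycle $(1\,2\,\cdots\,n)$, i.e.\ the matrix with a $1$ in positions $(i+1 \bmod n,\, i)$ and zeros elsewhere.

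The first step is to identify the centralizer of $B_1$. Comparing the $(i,j)$ entry of $CB_1 = B_1 C$ gives $\lambda_j C_{ij} = \lambda_i C_{ij}$, and since the $\lambda_i$ are distinct this forces $C_{ij} = 0$ for $i \neq j$. So every $C$ commuting with $B_1$ is diagonal. The second step is to impose commutation with $B_2$ on a diagonal $D = \operatorname{diag}(d_1, \ldots, d_n)$. Since $B_2$ sends the $i$-th standard basis vector to the $(i+1)$-st (indices mod $n$), the equation $DB_2 = B_2 D$ reduces to $d_{i+1} = d_i$ for every $i$, so $D$ is a scalar matrix. Combining the two steps, any $C$ commuting with both $B_1$ and $B_2$ lies in $k \cdot I_{n \times n}$, as required.

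I don't anticipate a real obstacle here, since this is a standard observation: the pair $(B_1, B_2)$ is chosen to make $k^n$ irreducible as a module over the subalgebra of $M_{n\times n}(k)$ they generate. A more abstract alternative would be to invoke Burnside's theorem, showing that $B_1, B_2$ generate $M_{n\times n}(k)$ as a $k$-algebra and then applying Schur's lemma. The only minor point to watch is the existence of $n$ distinct scalars in $k$, which is automatic because $k$ is algebraically closed and hence infinite.
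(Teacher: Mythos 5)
Your proof is correct, and it takes a genuinely different route from the paper. Both arguments begin the same way, taking $B_1$ diagonal with $n$ distinct eigenvalues (available since $k$ is algebraically closed, hence infinite) and observing that its centralizer consists of the diagonal matrices. From there the paper proceeds non-constructively: it chooses $n$ vectors $x_1,\ldots,x_n$ generically so that no subset of them spans any of the $2^n-2$ nontrivial $B_1$-invariant subspaces, takes $B_2$ to be diagonalizable with these as eigenvectors, concludes that $B_1,B_2$ have no common nontrivial invariant subspace, and then invokes Burnside's theorem to deduce that they generate all of $\End(k^n)$, whence the joint centralizer is scalar. You instead exhibit an explicit $B_2$, the cyclic permutation matrix, and finish with a two-line computation showing a diagonal matrix commuting with it must be scalar. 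Your argument is shorter, fully explicit, and avoids Burnside's theorem entirely; what it does not give (and does not need for this proposition) is the stronger conclusion that $B_1,B_2$ generate the full matrix algebra, which the paper's route establishes along the way but never actually uses beyond the centralizer statement. Either proof serves the role this proposition plays in the construction of Proposition \ref{prop:matrix_bound}.
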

\begin{proof}
    Let $B_1$ be a diagonal matrix with $n$ different values on the diagonal.
Then there are exactly $2^n$ subspaces of $k^n$ that are invariant under $B_1$ given by the direct sums of the $n$ distinct eigenspaces (p.811 of \cite{fillmore_invariant}).
Consider $n$ linearly independent vectors $x_1,...,x_n$.
Fix a subspace $U\subset k^n$ that is not $0$ or $k^n$.
The condition that no subset of $\{x_1,...,x_n\}$ forms a basis for $U$
is clearly a nonempty open condition for $\{x_1,...,x_n\}$ because we can express it using the wedge product $\wedge$ and the Plücker embedding.
Thus, the condition that no subset of $\{x_1,...,x_n\}$ forms a basis for any of the $2^n-2$ nontrivial invariant subspaces of $B_1$ is an nonempty open condition on the space of $n$ linearly independent vectors.
Pick $x_1,...,x_n$ that satisfy this condition.
Let $B_2$ be the matrix that has distinct eigenvalues for each of $x_1,...,x_n$.
Then, by construction, the only invariant subspaces shared by $B_1,B_2$ are $0$ and $k^n$.
Burnside's theorem on matrix algebra then says that $B_1,B_2$ generate $\End(k^n)$ \cite{burnside}.
Thus, any $C$ that commutes with $B_1,B_2$ must commute with all matrices, so it has to be proportional to the identity.
\end{proof}

Now we come to the key construction that will allow us to prove the subset of anchoring subspaces is nonempty.
\begin{proposition}\label{prop:matrix_bound}
    Given $m\geq \max(\frac{n}{d} ,\frac{d}{n}) + 2$, there exists $v_{i\mu a}$ such that the only solutions for Equation \ref{eqn:tensor} are proportional to $(I_{n\times n}, I_{d\times d})$
\end{proposition}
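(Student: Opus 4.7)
The plan is to package the tensor $v_{i\mu a}$ as an $m$-tuple of $n \times d$ matrices $V_a = (v_{i\mu a})_{i,\mu}$ so that Equation (\ref{eqn:tensor}) becomes the family of matrix equations $A V_a = V_a C$, one for each $a$. Transposing every $V_a$ simultaneously swaps the roles of $n$ and $d$ (and of $A$ and $C$), so without loss of generality I assume $d \geq n$. Writing $d = qn + r$ with $0 \leq r < n$, the hypothesis $m \geq d/n + 2$ forces $m \geq q+2$ when $r=0$ and $m \geq q+3$ when $r>0$, which is exactly the number of nonzero $V_a$ my construction will need; any remaining slots are padded by the zero matrix.

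For the construction I first choose $B_1, B_2 \in M_{n \times n}(k)$ as in Proposition \ref{prop:burnside}, so that the only matrices commuting with both are scalar. I partition the $d$ columns of an $n \times d$ matrix into $q$ blocks of width $n$ followed by a tail of width $r$. For $1 \leq a \leq q$ I let $V_a$ be zero except for $I_n$ in its $a$-th block column; I let $V_{q+1}$ and $V_{q+2}$ be zero except for $B_1$ and $B_2$ respectively in the first block column; and if $r > 0$, I pick any $Y \in M_{n \times r}(k)$ of full column rank $r$ (possible because $r < n$) and let $V_{q+3}$ be zero except for $Y$ in the tail block column.

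The verification is a sequential block computation. Viewing $C$ as a $(q+1) \times (q+1)$ block matrix matching the partition, the equations $AV_a = V_a C$ for $a = 1, \ldots, q$ each pin down a full block row of $C$: the $a$-th block row becomes $A$ in the $a$-th diagonal slot and zero in every other entry, including the tail. Using that the first block row of $C$ is now known to be $[A, 0, \ldots, 0]$, the equations for $V_{q+1}$ and $V_{q+2}$ collapse to $A B_i = B_i A$ for $i = 1, 2$, so $A = \lambda I_n$ by choice of $B_1, B_2$. When $r > 0$, the equation for $V_{q+3}$ reads $Y$ times the last block row of $C$ equals $\lambda Y$ in the tail and zero in all other blocks; since $Y$ is left invertible, the last block row of $C$ must be $[0, \ldots, 0, \lambda I_r]$. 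Hence $(A, C) = (\lambda I_n, \lambda I_d)$, and Proposition \ref{prop:tensor_solution} supplies the required anchoring subspace.

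The main subtlety I anticipate is the remainder case $r > 0$, where both the off-diagonal and the diagonal pieces of the final block row of $C$ must be extracted from the single equation for $V_{q+3}$; this succeeds precisely because the full column rank of $Y$ supplies a left inverse, so $YM = 0$ forces $M = 0$ and $YF = \lambda Y$ forces $F = \lambda I_r$. Beyond that, the proof is a tidy block-matrix bookkeeping exercise tying Proposition \ref{prop:burnside} to Proposition \ref{prop:tensor_solution}.
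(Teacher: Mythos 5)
Your proof is correct and follows essentially the same route as the paper: reduce Equation~(\ref{eqn:tensor}) to the simultaneous matrix equations $AV_a = V_aC$, normalize by transposition, tile the larger side with identity blocks to propagate $A$ (resp.\ $C$) along the diagonal, and finish with the Burnside pair from Proposition~\ref{prop:burnside}. The only differences are cosmetic: you take $d\geq n$ where the paper takes $n\geq d$, and you handle the remainder $r$ with one extra matrix carrying a full-column-rank tail block $Y$, whereas the paper absorbs it by truncating the last identity block — both fit within the same count $\lceil \max(n/d,d/n)\rceil+2\leq m$.
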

\begin{proof}
    Equation \ref{eqn:tensor} is a set of $m$ different matrix equations:
    \begin{align*}
        \{A V_a = V_a C\}_{a=1,...,m}
    \end{align*}
    where $A, V_a, C$ are $n\times n,$ $ n\times d$, and $d\times d$ respectively.
    Because taking transpose is allowed, the roles of $A$ and $C$ are interchangeable as far this proposition is concerned.
    Thus, without loss of generality, we assume that $n\geq d$.
    In this case, if we prove the proposition for $m = \lceil \frac{n}{d}\rceil + 2$, any larger $m$ will also work since they just add more constraints.
    Given a matrix $M$, let $M[r_1: r_2| c_1:c_2]$ denote the $(r_2 - r_1)\times (c_2 - c_1)$ submatrix starting at row $r_1$ and column $c_1$.
    
    For $a = 1,...,\lceil \frac{n}{d}\rceil$, let $V_a$ be defined by 
    $$
    V_a[(a-1)d + 1: \min((a-1)d + 1 + d, n + 1)| 1:d+1] = I_{d\times d}[1: \min(d, n-(a-1)d) + 1| 1: d+1]
    $$
    and zero everywhere else.
    Let $V_{m-1}, V_{m}$ be be any $n\times d$ matrices such that
    \begin{align*}
        V_{m-1}[1: d+1| 1:d+1] = B_1\\
        V_{m}[1: d+1| 1:d+1] = B_2
    \end{align*}
    where $B_i$s are the $d\times d$ matrices appearing in Proposition \ref{prop:burnside}.
    Given that $AV_a = V_aC$ holds for $a = 1,...,\lceil \frac{n}{d}\rceil$, due to $V_a$ being the identity for specific rows and $0$ elsewhere, we see that the matrix $A$ must satisfy the following constraints:
    \begin{itemize}
        \item for $a = 1,...,\lceil \frac{n}{d}\rceil$
        \begin{align} \label{eqn:matrix}
            &A[(a-1)d + 1: \min((a-1)d + 1 + d, n + 1)| (a-1)d + 1: \min((a-1)d + 1 + d, n + 1)] \\= &C[1: \min(d, n-(a-1)d) + 1| 1, 1: \min(d, n-(a-1)d) + 1]
        \end{align}
        \item The other entries of $A$ must be identically zero.
    \end{itemize}
    Given that $AV_a = V_aC$ holds for $a = m-1, m$, we see that $CB_i = B_iC$ for $i = 1,2$.
    Thus, $C = \lambda I_{d\times d}$ for some $\lambda \in  k$, so by Equation \ref{eqn:matrix}, we know $A = \lambda I_{n\times n}$.
\end{proof}

Now, we are finally ready for a proof of Lemma \ref{lem:general_anchor}.
\begin{proof}[Proof of Lemma \ref{lem:general_anchor}]
Let us check that for $d \in (\frac{2n}{m}, \lfloor\frac{nm}{2}\rfloor]$, the condition of Proposition \ref{prop:matrix_bound}
$$
m\geq \max(\lceil\frac{n}{d}\rceil,\lceil\frac{d}{n}\rceil) + 2
$$
is met. 
For $d\in (\frac{2n}{m},n]$, we have
\begin{align*}
    \max(\lceil\frac{n}{d}\rceil,\lceil\frac{d}{n}\rceil) + 2 = \lceil\frac{n}{d}\rceil + 2
    \leq \lceil\frac{nm}{2n}\rceil + 2
    =\lceil\frac{m}{2}\rceil + 2
    \leq m
\end{align*}
when $m\geq 4$.
For $d\in (n,\lfloor\frac{nm}{2}\rfloor]$, we have
\begin{align*}
    \max(\lceil\frac{n}{d}\rceil,\lceil\frac{d}{n}\rceil) + 2 = \lceil\frac{d}{n}\rceil + 2
    \leq \lceil\frac{nm}{2n}\rceil + 2
    =\lceil\frac{m}{2}\rceil + 2
    \leq m
\end{align*}
when $m\geq 4$. Thus, by Proposition \ref{prop:tensor_solution} and Proposition \ref{prop:anchor_open}, for these values of $d$, a general $d$-dimensional subspace of $U\otimes W$ will anchor $U$.
For $d\in (\lfloor\frac{nm}{2}\rfloor, nm-\frac{2n}{m})$, we simply apply Proposition \ref{prop:anchor_duality}, considering the case when $nm$ is odd and the case when $nm$ is even separately.
\end{proof}

\section*{Acknowledgements}\label{sec:acknowledgements}
First and foremost, I have to thank Professor Mihnea Popa for supervising my research. 
This project would not have been possible without him.
Secondly, I must thank Wanchun Shen and Duc Vo. 
They helped me understand the key literature in the area by guiding me in the Directed Reading Program and provided valuable insights when I discussed ideas with them.
Lastly, I have to thank Professor Marcos Jardim for answering my questions about his paper, which directly inspired my research \cite{Jardim}.

\clearpage


\begin{thebibliography}{10}

\bibitem{abe_splitting}
Takuro Abe and Masahiko Yoshinaga.
\newblock Splitting criterion for reflexive sheaves.
\newblock {\em Proc. Amer. Math. Soc.}, 136(6):1887--1891, 2008.

\bibitem{Bernshtein1978AlgebraicBO}
I.~N. Bernshtein, Izrail~Moiseevich Gel'fand, and S.~I. Gel'fand.
\newblock Algebraic bundles over $\mathbb{P}^n$ and problems of linear algebra.
\newblock {\em Functional Analysis and Its Applications}, 12:212--214, 1978.

\bibitem{spindler_stability}
Guntram Bohnhorst and Heinz Spindler.
\newblock The stability of certain vector bundles on $\mathbb{P}^n$.
\newblock In Klaus Hulek, Thomas Peternell, Michael Schneider, and Frank-Olaf
  Schreyer, editors, {\em Complex Algebraic Varieties}, pages 39--50, Berlin,
  Heidelberg, 1992. Springer Berlin Heidelberg.

\bibitem{brambilla2007cokernel}
Maria~Chiara Brambilla.
\newblock Cokernel bundles and {F}ibonacci bundles.
\newblock {\em Math. Nachr.}, 281(4):499--516, 2008.

\bibitem{fillmore_invariant}
L.~Brickman and P.~A. Fillmore.
\newblock The invariant subspace lattice of a linear transformation.
\newblock {\em Canadian J. Math.}, 19:810--822, 1967.

\bibitem{Coanda}
I.~Coandă and G.~Trautmann.
\newblock Horrocks theory and the {B}ernstein-{G}el'fand-{G}el'fand
  correspondence.
\newblock {\em Transactions of the American Mathematical Society},
  358(3):1015--1031, 2006.

\bibitem{EGA}
Jean Dieudonn{\'e} and Alexander Grothendieck.
\newblock \'{E}l\'ements de g\'eom\'etrie alg\'ebrique.
\newblock {\em Inst. Hautes \'Etudes Sci. Publ. Math.}, 4, 1961--1967.

\bibitem{Eisenbud_syzygies}
David Eisenbud.
\newblock {\em The geometry of syzygies}, volume 229 of {\em Graduate Texts in
  Mathematics}.
\newblock Springer-Verlag, New York, 2005.
\newblock A second course in commutative algebra and algebraic geometry.

\bibitem{Eisenbud_floystad}
David Eisenbud, Gunnar Fl{\o}ystad, and Frank-Olaf Schreyer.
\newblock Sheaf cohomology and free resolutions over exterior algebras.
\newblock {\em Transactions of the American Mathematical Society},
  355(11):4397--4426, jul 2003.

\bibitem{burnside}
Israel Halperin and Peter Rosenthal.
\newblock Burnside's theorem on algebras of matrices.
\newblock {\em American Mathematical Monthly}, 87, 12 1980.

\bibitem{harris1992}
Joe Harris.
\newblock {\em Algebraic geometry}, volume 133 of {\em Graduate Texts in
  Mathematics}.
\newblock Springer-Verlag, New York, 1992.
\newblock A first course.

\bibitem{Hartshorne}
Robin Hartshorne.
\newblock {\em Algebraic geometry}, volume No. 52 of {\em Graduate Texts in
  Mathematics}.
\newblock Springer-Verlag, New York-Heidelberg, 1977.

\bibitem{Hartshorne_conjecture}
Robin Hartshorne.
\newblock Algebraic vector bundles on projective spaces: A problem list.
\newblock {\em Topology}, 18(2):117--128, 1979.

\bibitem{Jardim}
Marcos Jardim and Daniela~Moura Prata.
\newblock Pure resolutions of vector bundles on complex projective spaces.
\newblock preprint arXiv:1210.7835.

\bibitem{Popa_bgg}
Robert Lazarsfeld and Mihnea Popa.
\newblock Derivative complex, {BGG} correspondence, and numerical inequalities
  for compact {K}\"{a}hler manifolds.
\newblock {\em Invent. Math.}, 182(3):605--633, 2010.

\end{thebibliography}

\newpage

\end{document}